\documentclass[10pt, reqno]{amsart}

\usepackage{hyperref, cite, xcolor}
\usepackage{caption}
\usepackage{subcaption}
\usepackage{graphicx}
\usepackage{amsmath}
\usepackage{amsthm}
\usepackage{amssymb}
\usepackage{amsfonts}
\usepackage{latexsym}
\usepackage{setspace}
\usepackage{enumitem}
\setlist[itemize]{leftmargin=*}
\setlist[enumerate]{leftmargin=*}

\usepackage[font=footnotesize,labelfont=bf]{caption}

\graphicspath{{./images/}}

\renewcommand{\epsilon}{\varepsilon}

\newtheorem{theorem}{Theorem}
\newtheorem{conjecture}{Conjecture}

\newtheorem{lemma}[theorem]{Lemma}

\theoremstyle{definition}

\numberwithin{equation}{section}



\renewcommand{\phi}{\varphi}

\newcommand{\q}{\mathfrak{q}}

\newcommand{\Z}{\mathbb{Z}}
\renewcommand{\pmod}[1]{\,\,(\operatorname{mod} #1)}

\renewcommand{\geq}{\geqslant}
\renewcommand{\leq}{\leqslant}

\let\oldenumerate=\enumerate
	\def\enumerate{
	\oldenumerate
	\setlength{\itemsep}{5pt}
	}
\let\olditemize=\itemize
	\def\itemize{
	\olditemize
	\setlength{\itemsep}{5pt}
	}

\allowdisplaybreaks


\begin{document}

\title{Primitive root bias for twin primes}

\author[S.R.~Garcia]{Stephan Ramon Garcia}
\address{Department of Mathematics\\Pomona College\\610 N. College Ave., Claremont, CA 91711} 
\email{stephan.garcia@pomona.edu}
\urladdr{http://pages.pomona.edu/~sg064747}
\thanks{SRG supported by NSF grant DMS-1265973,
a David L. Hirsch III and Susan H. Hirsch Research Initiation Grant, 
and the Budapest Semesters in Mathematics (BSM)
Director's Mathematician in Residence (DMiR) program.}

\author[E.~Kahoro]{Elvis Kahoro}

\author[F.~Luca]{Florian Luca}
\address{School of Mathematics\\University of the Witwatersrand\\Private Bag 3, Wits 2050, Johannesburg, South Africa\\
Max Planck Institute for Mathematics, Vivatgasse 7, 53111 Bonn, Germany\\
Department of Mathematics, Faculty of Sciences, University of Ostrava, 30 dubna 22, 701 03
Ostrava 1, Czech Republic}
\email{Florian.Luca@wits.ac.za}
\thanks{F. L. was supported in part by grants CPRR160325161141 and an A-rated researcher award both from the NRF of South Africa and by grant no. 17-02804S of the Czech Granting Agency. }

\begin{abstract}
Numerical evidence suggests that for only about $2\%$ of pairs $p,p+2$ of twin primes, $p+2$ has more primitive roots than does $p$.  If this occurs, we say that $p$ is \emph{exceptional} (there are only two exceptional pairs with $5 \leq p \leq 10{,}000$).  Assuming the Bateman--Horn conjecture, we prove that at least $0.47\%$ of twin prime pairs are exceptional and at least $65.13\%$ are not exceptional.  We also conjecture a precise formula for the proportion of exceptional twin primes.
\end{abstract}

\subjclass[2010]{11A07, 11A41, 11N36, 11N37}

\keywords{prime, twin prime, primitive root, Bateman--Horn conjecture, Twin Prime Conjecture, Brun Sieve}

\maketitle

\section{Introduction}

Let $n$ be a positive integer.  An integer coprime to $n$ is a \emph{primitive root} modulo $n$
if it generates the multiplicative group $(\Z/n\Z)^{\times}$ of units modulo $n$.  
A famous result of Gauss states that $n$ possesses primitive roots if and only if
$n$ is $2$, $4$, an odd prime power, or twice an odd prime power.  If a primitive root modulo $n$ exists,
then $n$ has precisely $\phi(\phi(n))$ of them, in which $\phi$ denotes the Euler totient function.
If $p$ is prime, then $\phi(p) = p-1$ and hence $p$ has exactly $\phi(p-1)$ primitive roots.

If $p$ and $p+2$ are prime, then $p$ and $p+2$ are \emph{twin primes}.  
The Twin Prime Conjecture asserts that there are infinitely many twin primes.
While it remains unproved, recent years have seen an explosion of closely-related work
\cite{Polymath, Zhang, Maynard}.
Let $\pi_2(x)$ denote the number of primes $p$ at most $x$ for which $p+2$ is prime.
The first Hardy--Littlewood conjecture asserts that
\begin{equation}\label{eq:HL}
 \pi _{2}(x)\,\sim\, 2C_{2}\int _{2}^{x}{dt \over (\log  t)^{2}},
\end{equation}
in which
\begin{equation}\label{eq:TwinPrimesConstant}
C_2 = \prod_{p \geq 3} \frac{p(p-2)}{(p-1)^2} = 0.660161815\ldots.
\end{equation}
is the \emph{twin primes constant} \cite{Hardy}.  A simpler expression that is asymptotically equivalent to \eqref{eq:HL}
is $2C_{2}x/(\log  x)^2$.

A casual inspection (see Table \ref{Table:PrimRootList})
suggests that if $p$ and $p+2$ are primes and $p \geq 5$, then 
$p$ has at least as many primitive roots as $p+2$; that is, 
$\phi(p-1) \geq \phi(p+1)$.
If this occurs, then $p$ is \emph{unexceptional}.  The preceding inequality
holds for all twin primes $p,p+2$ with
$5 \leq p \leq 10{,}000$, except for 
the pairs $2381, 2383$ and $3851, 3853$.  

\begin{table}\footnotesize
\begin{equation*}
\begin{array}{|c|ccc||c|ccc|}
\hline
p & \phi(p-1) & \phi(p+1) & \delta(p) & 
p & \phi(p-1) & \phi(p+1) & \delta(p) \\
\hline
 5 & 2 & 2 & 0 & 821 & 320 & 272 & 48 \\
 11 & 4 & 4 & 0 & 827 & 348 & 264 & 84 \\
 17 & 8 & 6 & 2 & 857 & 424 & 240 & 184 \\
 29 & 12 & 8 & 4 & 881 & 320 & 252 & 68 \\
 41 & 16 & 12 & 4 & 1019 & 508 & 256 & 252 \\
 59 & 28 & 16 & 12 & 1031 & 408 & 336 & 72 \\
 71 & 24 & 24 & 0 & 1049 & 520 & 240 & 280 \\
 101 & 40 & 32 & 8 & 1061 & 416 & 348 & 68 \\
 107 & 52 & 36 & 16 & 1091 & 432 & 288 & 144 \\
 137 & 64 & 44 & 20 & 1151 & 440 & 384 & 56 \\
 149 & 72 & 40 & 32 & 1229 & 612 & 320 & 292 \\
 179 & 88 & 48 & 40 & 1277 & 560 & 420 & 140 \\
 191 & 72 & 64 & 8 & 1289 & 528 & 336 & 192 \\
 197 & 84 & 60 & 24 & 1301 & 480 & 360 & 120 \\
 227 & 112 & 72 & 40 & 1319 & 658 & 320 & 338 \\
 239 & 96 & 64 & 32 & 1427 & 660 & 384 & 276 \\
 269 & 132 & 72 & 60 & 1451 & 560 & 440 & 120 \\
 281 & 96 & 92 & 4 & 1481 & 576 & 432 & 144 \\
 311 & 120 & 96 & 24 & 1487 & 742 & 480 & 262 \\
 347 & 172 & 112 & 60 & 1607 & 720 & 528 & 192 \\
 419 & 180 & 96 & 84 & 1619 & 808 & 432 & 376 \\
 431 & 168 & 144 & 24 & 1667 & 672 & 552 & 120 \\
 461 & 176 & 120 & 56 & 1697 & 832 & 564 & 268 \\
 521 & 192 & 168 & 24 & 1721 & 672 & 480 & 192 \\
 569 & 280 & 144 & 136 & 1787 & 828 & 592 & 236 \\
 599 & 264 & 160 & 104 & 1871 & 640 & 576 & 64 \\
 617 & 240 & 204 & 36 & 1877 & 792 & 624 & 168 \\
 641 & 256 & 212 & 44 & 1931 & 768 & 528 & 240 \\
 659 & 276 & 160 & 116 & 1949 & 972 & 480 & 492 \\
 809 & 400 & 216 & 184 & 1997 & 996 & 648 & 348 \\
\hline
\end{array}
\end{equation*}
\caption{For twin primes $p,p+2$ with $5 \leq p \leq 2000$, the 
difference $\delta(p) = \phi(p-1) - \phi(p+1)$ is nonnegative.  That is, $p$
has at least as many primitive roots as does $p+2$.}
\label{Table:PrimRootList}
\end{table}

If $p,p+2$ are primes with $p \geq 5$ and $\phi(p-1) < \phi(p+1)$, then $p$ is \emph{exceptional}.
We do not regard $p=3$ as exceptional for technical reasons.
Let $\pi_e(x)$ denote the number of exceptional primes $p \leq x$; that is,
\begin{equation*}
\pi_e(x) \,=\, \# \big\{ p \leq x : \text{$p$ and $p+2$ are prime and $\phi(p-1) < \phi(p+1)$} \big\}.
\end{equation*}
Computational evidence suggests that approximately $2\%$ of twin primes are exceptional;
see Table \ref{Table:First100}.  We make the following conjecture.

\begin{table}\footnotesize
\begin{equation*}
\begin{array}{|c|cccl||c|cccl|}
\hline
p & \delta(p) & \pi_2(p) & \pi_e(p) & \pi_e(p) / \pi_2(p) &
p & \delta(p) & \pi_2(p) & \pi_e(p) & \pi_e(p) / \pi_2(p)  \\
\hline
 2381 & -24 & 71 & 1 & 0.0140845 & 230861 & -2304 & 2427 & 51 & 0.0210136 \\
 3851 & -72 & 100 & 2 & 0.02 & 232961 & -1952 & 2447 & 52 & 0.0212505 \\
 14561 & -240 & 268 & 3 & 0.011194 & 237161 & -784 & 2486 & 53 & 0.0213194 \\
 17291 & -16 & 300 & 4 & 0.0133333 & 241781 & -4232 & 2517 & 54 & 0.0214541 \\
 20021 & -680 & 342 & 5 & 0.0146199 & 246611 & -4440 & 2557 & 55 & 0.0215096 \\
 20231 & -192 & 344 & 6 & 0.0174419 & 251231 & -768 & 2598 & 56 & 0.021555 \\
 26951 & -576 & 430 & 7 & 0.0162791 & 259211 & -1392 & 2657 & 57 & 0.0214528 \\
 34511 & -736 & 532 & 8 & 0.0150376 & 270131 & -3256 & 2755 & 58 & 0.0210526 \\
 41231 & -768 & 602 & 9 & 0.0149502 & 274121 & -5376 & 2788 & 59 & 0.0211621 \\
 47741 & -1152 & 672 & 10 & 0.014881 & 275591 & -1136 & 2800 & 60 & 0.0214286 \\
 50051 & -1728 & 706 & 11 & 0.0155807 & 278741 & -6512 & 2827 & 61 & 0.0215776 \\
 52361 & -2088 & 731 & 12 & 0.0164159 & 282101 & -7632 & 2853 & 62 & 0.0217315 \\
 55931 & -432 & 765 & 13 & 0.0169935 & 282311 & -720 & 2855 & 63 & 0.0220665 \\
 57191 & -912 & 780 & 14 & 0.0179487 & 298691 & -3552 & 2982 & 64 & 0.0214621 \\
 65171 & -552 & 856 & 15 & 0.0175234 & 300581 & -3420 & 3000 & 65 & 0.0216667 \\
 67211 & -312 & 876 & 16 & 0.0182648 & 301841 & -3840 & 3012 & 66 & 0.0219124 \\
 67271 & -96 & 878 & 17 & 0.0193622 & 312551 & -4752 & 3103 & 67 & 0.021592 \\
 70841 & -2492 & 915 & 18 & 0.0196721 & 315701 & -9228 & 3130 & 68 & 0.0217252 \\
 82811 & -720 & 1043 & 19 & 0.0182167 & 316031 & -5376 & 3132 & 69 & 0.0220307 \\
 87011 & -2112 & 1084 & 20 & 0.0184502 & 322631 & -7200 & 3197 & 70 & 0.0218955 \\
 98561 & -2132 & 1207 & 21 & 0.0173985 & 325781 & -6012 & 3230 & 71 & 0.0219814 \\
 101501 & -228 & 1235 & 22 & 0.0178138 & 328511 & -5440 & 3259 & 72 & 0.0220927 \\
 101531 & -240 & 1236 & 23 & 0.0186084 & 330821 & -4284 & 3283 & 73 & 0.0222358 \\
 108461 & -312 & 1302 & 24 & 0.0184332 & 341321 & -2928 & 3354 & 74 & 0.0220632 \\
 117041 & -4452 & 1388 & 25 & 0.0180115 & 345731 & -5088 & 3388 & 75 & 0.022137 \\
 119771 & -912 & 1420 & 26 & 0.0183099 & 348461 & -3348 & 3413 & 76 & 0.0222678 \\
 126491 & -1584 & 1482 & 27 & 0.0182186 & 354971 & -7920 & 3459 & 77 & 0.0222608 \\
 129221 & -2736 & 1508 & 28 & 0.0185676 & 356441 & -4764 & 3473 & 78 & 0.022459 \\
 134681 & -3420 & 1559 & 29 & 0.0186017 & 357281 & -6264 & 3480 & 79 & 0.0227011 \\
 136991 & -1568 & 1586 & 30 & 0.0189155 & 361901 & -10232 & 3520 & 80 & 0.0227273 \\
 142871 & -2688 & 1634 & 31 & 0.0189718 & 362951 & -4080 & 3525 & 81 & 0.0229787 \\
 145601 & -2448 & 1653 & 32 & 0.0193587 & 371141 & -2736 & 3580 & 82 & 0.022905 \\
 150221 & -1688 & 1703 & 33 & 0.0193776 & 399491 & -6048 & 3800 & 83 & 0.0218421 \\
 156941 & -2196 & 1772 & 34 & 0.0191874 & 402221 & -11064 & 3818 & 84 & 0.022001 \\
 165551 & -4768 & 1848 & 35 & 0.0189394 & 404321 & -1584 & 3838 & 85 & 0.022147 \\
 166601 & -1772 & 1855 & 36 & 0.019407 & 406631 & -752 & 3862 & 86 & 0.0222683 \\
 167861 & -3360 & 1869 & 37 & 0.0197967 & 410411 & -15568 & 3887 & 87 & 0.0223823 \\
 173741 & -56 & 1909 & 38 & 0.0199057 & 413141 & -3744 & 3909 & 88 & 0.0225122 \\
 175631 & -3232 & 1924 & 39 & 0.0202703 & 416501 & -4272 & 3934 & 89 & 0.0226233 \\
 188861 & -2472 & 2061 & 40 & 0.0194081 & 418601 & -12812 & 3949 & 90 & 0.0227906 \\
 197891 & -1392 & 2139 & 41 & 0.0191678 & 424271 & -20448 & 3996 & 91 & 0.0227728 \\
 202931 & -3672 & 2179 & 42 & 0.0192749 & 427421 & -1352 & 4026 & 92 & 0.0228515 \\
 203771 & -720 & 2190 & 43 & 0.0196347 & 438131 & -4576 & 4114 & 93 & 0.0226057 \\
 205031 & -1136 & 2204 & 44 & 0.0199637 & 440441 & -20088 & 4120 & 94 & 0.0228155 \\
 205661 & -3288 & 2208 & 45 & 0.0203804 & 448631 & -13536 & 4184 & 95 & 0.0227055 \\
 206081 & -468 & 2211 & 46 & 0.0208051 & 454721 & -1044 & 4232 & 96 & 0.0226843 \\
 219311 & -3936 & 2321 & 47 & 0.0202499 & 464171 & -912 & 4299 & 97 & 0.0225634 \\
 222041 & -1632 & 2347 & 48 & 0.0204516 & 464381 & -2148 & 4302 & 98 & 0.0227801 \\
 225611 & -5088 & 2381 & 49 & 0.0205796 & 465011 & -9840 & 4309 & 99 & 0.0229752 \\
 225941 & -432 & 2385 & 50 & 0.0209644 & 470471 & -24336 & 4341 & 100 & 0.0230362 \\
 \hline
\end{array}
\end{equation*}
\caption{The first $100$ exceptional $p$.  Here $\delta(p) = \phi(p-1) - \phi(p+1)$. }
\label{Table:First100}
\end{table}

\begin{conjecture}\label{Conjecture:Main}
A positive proportion of the twin primes are exceptional.  That is,
$\lim_{x \to \infty} \pi_e(x) / \pi_2(x)$ exists and is positive.
\end{conjecture}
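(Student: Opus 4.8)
The plan is to single out a family of twin primes cut out by finitely many congruence conditions, inside which all but a vanishingly small fraction of the members are exceptional, and then to invoke the Bateman--Horn conjecture to guarantee that this family already accounts for a positive proportion of all twin primes. The tool is the elementary identity $\phi(n)/n=\prod_{q\mid n}(1-1/q)$, which depends only on the set of primes dividing $n$. Writing $h(q):=-\log(1-1/q)>0$ and
\begin{equation*}
c(p):=\sum_{q\mid p+1}h(q)-\sum_{q\mid p-1}h(q),
\end{equation*}
one checks at once that a twin prime $p\geq 5$ is exceptional if and only if $c(p)<\log\frac{p+1}{p-1}$; in particular $c(p)<0$ already forces $p$ to be exceptional. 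The observed bias is now transparent: a twin prime $p\geq 5$ satisfies $p\equiv 2\pmod 3$, so $3\mid p+1$ but $3\nmid p-1$, and hence $c(p)$ always carries a free positive term $h(3)=\log\frac32\approx 0.405$, which is why $c(p)$ is usually positive and exceptional primes are rare.

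First I would fix a cutoff $Q$ and classify a twin prime $p$ according to which of $p-1,p+1$ is divisible by each prime $q\leq Q$; this is a union of residue classes modulo $\prod_{q\leq Q}q$, and it determines the truncation $c_Q(p):=\sum_{q\leq Q,\,q\mid p+1}h(q)-\sum_{q\leq Q,\,q\mid p-1}h(q)$, leaving $c(p)-c_Q(p)=Y(p)-X(p)$ with $Y(p):=\sum_{q>Q,\,q\mid p+1}h(q)$ and $X(p):=\sum_{q>Q,\,q\mid p-1}h(q)$ nonnegative and built only from large prime divisors. Take $Q=11$ and the class $p\equiv 1\pmod{385}$, so that $5,7,11$ all divide $p-1$ while the only primes $\leq 11$ dividing $p+1$ are $2$ and $3$; then
\begin{equation*}
c_{11}=h(3)-h(5)-h(7)-h(11)=-0.0671\ldots<0,
\end{equation*}
so any member of this class with $Y(p)<0.0671$ has $c(p)\leq c_{11}+Y(p)<0$ and is exceptional. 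By the Bateman--Horn conjecture applied to the linear forms $385n+1$ and $385n+3$, this class contains $\sim\frac1{135}$ of all twin primes up to $x$, namely $\sim\frac{2C_2}{135}\cdot\frac{x}{(\log x)^2}$ of them.

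It remains to bound the set $\{Y(p)\geq 0.0671\}$ away from the whole class, and I would do this in mean rather than pointwise. Writing $Y(p)=\sum_{q>11}h(q)\,\mathbf{1}[q\mid p+1]$ and using that, for each fixed $q>11$, the proportion of twin primes of the class with $q\mid p+1$ tends (again by Bateman--Horn, with $q$ adjoined to the modulus) to $1/(q-2)$, while Brun's sieve furnishes, uniformly in $q$, a majorant $\#\{p\leq x:\text{class},\,q\mid p+1\}\ll \frac{x}{(q-2)(\log x)^2}$ that legitimizes exchanging limit and sum, one obtains
\begin{equation*}
\lim_{x\to\infty}\frac{1}{\#\{p\leq x:\text{class}\}}\sum_{p\leq x,\ \text{class}}Y(p)=\sum_{q>11}\frac{h(q)}{q-2}<0.026 .
\end{equation*}
By Markov's inequality the limiting proportion of the class with $Y(p)\geq 0.0671$ is at most $0.026/0.0671<0.39$, so at least $0.61$ of the class is exceptional, whence $\liminf_{x\to\infty}\pi_e(x)/\pi_2(x)\geq 0.61/135>0.004$; this establishes Conjecture~\ref{Conjecture:Main} in its quantitative form. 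Keeping track of the helpful term $X(p)\geq 0$ more carefully, and summing the argument over several favorable classes, sharpens $0.004$ to $0.0047$; the symmetric estimate --- Markov's inequality applied to $X(p)$ over the many classes on which $c_Q$ is robustly positive --- gives the complementary $65.13\%$ unexceptional. The existence of the exact limit, and the precise formula conjectured for it, would require understanding the distribution of $Y(p)-X(p)$ near its potential atoms and is not addressed here.

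The main obstacle is precisely this control of the large-prime tails. Since a twin prime can have $p-1$ or $p+1$ divisible by many medium-sized primes, $Y(p)-X(p)$ is genuinely unbounded, so no pointwise estimate is possible; and the naive route --- summing Brun bounds for $\#\{p:m\mid p+1\}$ over all squarefree $m$ with $\sum_{q\mid m}h(q)\geq 0.0671$ --- gives a divergent, hence useless, series. The decisive idea is to pass to the $L^1$ statement above, where the individual Bateman--Horn densities $1/(q-2)$ reassemble into the convergent series $\sum_{q>11}h(q)/(q-2)$ and Brun's sieve is needed only as a crude dominating bound for the dominated-convergence step. Pushing the constants there (and the Bateman--Horn densities of the relevant congruence classes) hard enough to reach the stated percentages is then the remaining, essentially computational, task.
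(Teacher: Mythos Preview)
Your proposal is correct and takes essentially the same route as the paper: the class $p\equiv 1\pmod{385}$ with Bateman--Horn density $1/135$, the mean $\sum_{q\geq 13}h(q)/(q-2)\approx 0.0242$ of your $Y(p)$ (which the paper calls $F(p)$), and Markov's inequality at the threshold $-c_{11}=\log(77/72)\approx 0.0671$ are exactly the paper's ingredients, and the single class already yields the constant $0.0047$ once the sum is evaluated to $0.0242$ rather than $0.026$ (no extra classes or use of $X(p)$ are needed). The one place where the paper is more careful is the interchange of limit and sum for the mean of $Y(p)$: rather than asserting a Brun bound uniform in all $q$, it splits the sum over $q$ into the ranges $q\leq z$, $z<q\leq(\log x)^3$, and $q>(\log x)^3$, handling the last range by the trivial bound $N_q(x)\leq x/(385q)+1$ since the Brun estimate degrades when $q$ is close to $x$.
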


\begin{figure}
\includegraphics[width=\textwidth]{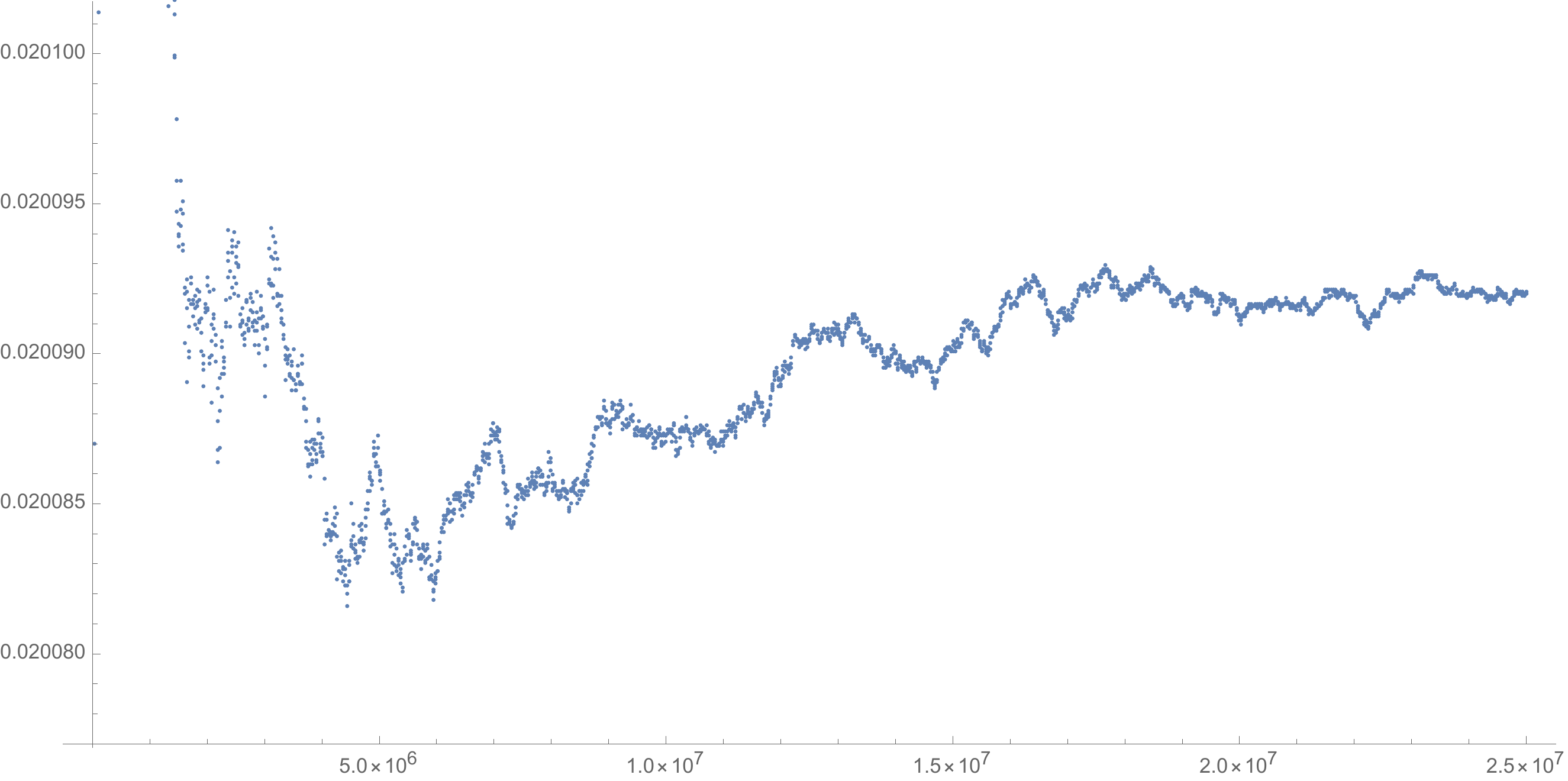}
\caption{Numerical evidence suggests that $\lim_{x\to\infty}\pi_e(x) / \pi_2(x)$ exists and is slightly larger than $2\%$.  The horizontal axis denotes the number of exceptional twin prime pairs.  The vertical axis represents the ratio $\pi_e / \pi_2$.}
\label{Figure:Ratio}
\end{figure}

We are able to prove Conjecture \ref{Conjecture:Main}, if we assume
the Bateman--Horn conjecture (stated below).  Our main theorem is the following.

\begin{theorem}\label{Theorem:Main}
Assume that the Bateman--Horn conjecture holds. 
\begin{enumerate}
\item The set of twin prime pairs $p, p+2$ for which $\phi(p-1) < \phi(p+1)$ has lower density (as a subset of twin primes) at least  $0.47\%$.
\item The set of twin prime pairs $p, p+2$ for which $\phi(p-1) \geq \phi(p+1)$ has lower density (as a subset of twin primes) at least  $65.13\%$.
\end{enumerate}
\end{theorem}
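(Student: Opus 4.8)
Our plan is to reduce both assertions to the distribution of the additive quantity
\[
S(p) \;=\; \sum_{\substack{q\mid p-1\\ q\geq 5}} h(q) \;-\; \sum_{\substack{q\mid p+1\\ q\geq 5}} h(q),
\qquad h(q):=\log\tfrac{q}{q-1}=-\log\bigl(1-\tfrac1q\bigr),
\]
the sums running over primes, and then to feed the Bateman--Horn asymptotics for twin primes in residue classes into a Brun-type sieve that controls the contribution of the large primes. \textbf{Step 1 (reduction).} Since $p,p+2$ are primes $\geq5$ we have $2\mid p\pm1$ and, because $p\equiv 2\pmod 3$, also $3\mid p+1$, $3\nmid p-1$. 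Writing $\phi(n)=n\prod_{q\mid n}(1-1/q)$ and cancelling the common factor $1-\tfrac12$, one finds $\phi(p-1)/\phi(p+1)=\tfrac{3(p-1)}{2(p+1)}\exp(-S(p))$, so $p$ is \emph{unexceptional} iff $S(p)\leq\log\tfrac{3(p-1)}{2(p+1)}$ and \emph{exceptional} iff $S(p)>\log\tfrac{3(p-1)}{2(p+1)}$, where $\log\tfrac{3(p-1)}{2(p+1)}\uparrow\log\tfrac32$.

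\textbf{Step 2 (unexceptional bound).} Write $S(p)=W(p)-W'(p)$ with $W(p)=\sum_{q\mid p-1,\,q\geq5}h(q)\geq0$ and $W'(p)=\sum_{q\mid p+1,\,q\geq5}h(q)\geq0$; then $W(p)\leq\log\tfrac{3(p-1)}{2(p+1)}$ already forces $p$ unexceptional. Fix $y$ and split $W(p)=W_{<y}(p)+W_{\geq y}(p)$, where $W_{<y}(p)$ depends only on $p$ modulo $Q_y:=\prod_{5\leq q<y}q$. By Bateman--Horn the proportion of twin primes $p\leq x$ in a given admissible class modulo $6Q_y$ tends to $\bigl(\prod_{5\leq q<y}(q-2)\bigr)^{-1}$, so the density within the twin primes of $\{W_{<y}(p)<t\}$ equals $\P\bigl[\sum_{5\leq q<y}Z_q h(q)<t\bigr]$ for independent $Z_q\sim\mathrm{Bernoulli}(\tfrac1{q-2})$; Markov's inequality then gives, with $C_0:=\sum_{q\geq5}\tfrac{h(q)}{q-2}$,
\[
\P\bigl[\,W_{<y}(p)<\log\tfrac32-\theta\,\bigr]\;\geq\;1-\frac{\mathbb{E}[W_{<y}]}{\log\frac32-\theta}\;\geq\;1-\frac{C_0}{\log\frac32-\theta}.
\]
For the tail, a Brun (or Selberg) upper bound $\#\{p\leq x\text{ twin}: p\equiv a\pmod q\}\ll x\big/\bigl((q-2)(\log x)^2\bigr)$, uniform in $q$, yields $\pi_2(x)^{-1}\sum_{p\leq x\text{ twin}}W_{\geq y}(p)\leq K\sum_{q\geq y}\tfrac{h(q)}{q-2}$ with $K$ absolute, so the density of $\{W_{\geq y}(p)\geq\theta\}$ is at most $(K/\theta)\sum_{q\geq y}\tfrac{h(q)}{q-2}$. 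Since $\sum_{q\geq y}\tfrac{h(q)}{q-2}\asymp 1/(y\log y)\to0$, for fixed $\theta$ this error vanishes as $y\to\infty$; combining and then letting $\theta\to0$ shows the unexceptional twin primes have lower density at least $1-C_0/\log\tfrac32$, and evaluating $C_0$ numerically gives $65.13\%$.

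\textbf{Step 3 (exceptional bound).} Let $\mathcal R$ be a finite set of primes $\geq5$ with $\delta_{\mathcal R}:=\sum_{q\in\mathcal R}h(q)-\log\tfrac32>0$, and consider the twin primes $p$ with $q\mid p-1$ for every $q\in\mathcal R$ (and $p\equiv2\pmod3$); by Bateman--Horn this is an admissible class of density $\prod_{q\in\mathcal R}\tfrac1{q-2}$ inside the twin primes, and on it $W'(p)=W'_{\mathcal R}(p):=\sum_{q\geq5,\,q\notin\mathcal R,\,q\mid p+1}h(q)$, whence $S(p)\geq\sum_{q\in\mathcal R}h(q)-W'_{\mathcal R}(p)$. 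Thus $S(p)>\log\tfrac{3(p-1)}{2(p+1)}$ (i.e.\ $p$ exceptional) as soon as $W'_{\mathcal R}(p)<\delta_{\mathcal R}$. Exactly as in Step~2 (Bateman--Horn for the small primes, the sieve bound for the large ones, then Markov) the conditional density of $\{W'_{\mathcal R}(p)\geq\delta_{\mathcal R}\}$ is at most $\delta_{\mathcal R}^{-1}\sum_{q\geq5,\,q\notin\mathcal R}\tfrac{h(q)}{q-2}$. Taking $\mathcal R=\{5,7,11\}$, so that $\delta_{\mathcal R}=h(5)+h(7)+h(11)-\log\tfrac32\approx0.068$ and the class has density $\tfrac1{3\cdot5\cdot9}=\tfrac1{135}$, gives lower density at least $\tfrac1{135}\bigl(1-\delta_{\mathcal R}^{-1}\sum_{q\geq13}\tfrac{h(q)}{q-2}\bigr)\approx0.47\%$ for the exceptional twin primes; adjoining a few further pairwise disjoint favourable classes recovers the stated constant.

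\textbf{Main obstacle.} The crux is the uniform control of the tails $\sum_{q\geq y,\,q\mid p\pm1}h(q)$: Bateman--Horn as usually formulated gives no uniformity in the modulus, so the infinitely many divisibility conditions ``$q\mid p\pm1$'' must be handled by an \emph{unconditional} sieve upper bound for twin primes in arithmetic progressions, then converted into a density statement via Markov's inequality. The lossy sieve constant $K$ is harmless only because $\sum_{q\geq y}\tfrac{h(q)}{q-2}\asymp1/(y\log y)\to0$, which lets the entire tail error be pushed to $0$ by sending $y\to\infty$ before the free parameters are optimized; this is also why the inequalities above end up essentially as clean applications of Markov's inequality to $W$ and to $W'_{\mathcal R}$. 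The only remaining bookkeeping is that $\log\tfrac{3(p-1)}{2(p+1)}$ must be kept distinct from its limit $\log\tfrac32$, which costs nothing once the auxiliary parameters are sent to their limits.
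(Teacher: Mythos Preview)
Your proposal is correct and follows essentially the same route as the paper: Markov's inequality applied to $W(p)$ (the paper's $G(p)$) for part~(b), and restriction to the class $5,7,11\mid p-1$ (density $1/135$) followed by Markov on $W'_{\mathcal R}(p)$ (the paper's $F(p)$) with threshold $\delta_{\mathcal R}=\log(77/72)$ for part~(a), in each case using Bateman--Horn for small moduli and the Brun sieve for the tail. The only cosmetic differences are that the paper replaces your $\theta\to0$ limit by first proving the equivalence $\phi(p-1)\geq\phi(p+1)\iff\phi(p-1)/(p-1)\geq\phi(p+1)/(p+1)$ so that the clean threshold $\log\tfrac32$ can be used throughout, and that your closing remark about ``adjoining a few further pairwise disjoint favourable classes'' is unnecessary since the single class $\{5,7,11\}$ already gives $\tfrac{1}{135}\bigl(1-\log(77/72)^{-1}\sum_{q\geq13}h(q)/(q-2)\bigr)\approx0.474\%>0.47\%$.
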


Computations suggest that the value of the limit in Conjecture \ref{Conjecture:Main}
is approximately $2\%$;
see Figures \ref{Figure:Ratio} and \ref{Figure:BlueRed}.
A value for the limiting ratio is proposed in Section \ref{Section:Conjecture}.

It is also worth pointing out that 
this bias is specific to the twin primes since
the set of primes $p$ for which $\phi(p-1) - \phi(p+1)$ is positive (respectively, negative) has density $50\%$ as a subset of the primes \cite{GaLu}.  That is, if we remove the assumption that $p+2$ is
also prime, then the bias completely disappears.  Although only tangentially related to the present discussion, it is worth
mentioning the exciting preprint \cite{Sound} which concerns a peculiar and unexpected bias in the primes.

\section{The Bateman--Horn conjecture}

The proof of Theorem \ref{Theorem:Main} is deferred until Section \ref{Section:Proof}.
We first require a few words about the Bateman--Horn conjecture.
Let $f_1,f_2,\ldots,f_m$ be a collection of distinct irreducible polynomials 
with positive leading coefficients.  An integer $n$ is \emph{prime generating} for 
this collection if each $f_1(n), f_2(n),\ldots, f_m(n)$ is prime. 
Let $P(x)$ denote
the number of prime-generating integers at most $x$ and
suppose that $f = f_1f_2\cdots f_m$ does not vanish identically modulo any prime.
The \emph{Bateman--Horn conjecture} is
\begin{equation*}
P(x)\, \sim\,  \frac{C}{D} \int _{2}^{x}\frac{dt}{(\log t)^m},
\end{equation*}
in which 
\begin{equation*}
D = \prod_{i=1}^m \deg f_i
\quad \text{and} \quad
C=\prod_{p} \frac{1-N_f(p)/p}{(1-1/p)^{m}},
\end{equation*}
where $N_f(p)$ is the number of solutions to $f(n) \equiv 0 \pmod{p}$ \cite{Bateman}.

If $f_1(t) = t$ and $f_2(t) = t+2$, then $f(t) = t(t+2)$, $N_f(2) = 1$, and
$N_f(p) = 2$ for $p \geq 3$.  In this case, Bateman--Horn predicts
\eqref{eq:HL}, the first Hardy--Littlewood conjecture, which in turn 
implies the Twin Prime Conjecture.

Although weaker than the Bateman--Horn conjecture, the Brun sieve \cite[Thm.~3, Sect.~I.4.2]{Tenenbaum}
has the undeniable advantage of being proven.  It says that there
exists a constant $B$ that depends only on $m$ and $D$ such that
\begin{equation*}
P(x)\leq  \frac{BC}{D}\int_2^x \frac{dt}{(\log t)^m}
=(1+o(1))\frac{BC}{D} \frac{x}{(\log x)^m} 
\end{equation*}
for sufficiently large $x$.  
In particular, 
\begin{equation*}
\pi_2(x)\leq \frac{K x}{(\log x)^2}
\end{equation*}
for some constant $K$ and sufficiently large $x$. 
The best known $K$ in the estimate above is $K=4.5$ \cite{Jie}.

	\begin{figure}
		\centering
		\begin{subfigure}[b]{0.475\textwidth}
	                \centering
	                \includegraphics[width=\textwidth]{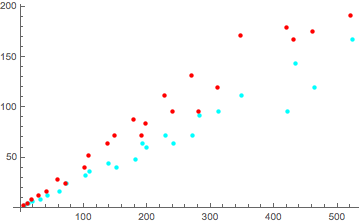}
	                \caption{{\scriptsize First 500 twin primes}}
	                \label{fig:TP03}
	        \end{subfigure}
	        \quad
	        		\begin{subfigure}[b]{0.475\textwidth}
	                \centering
	                \includegraphics[width=\textwidth]{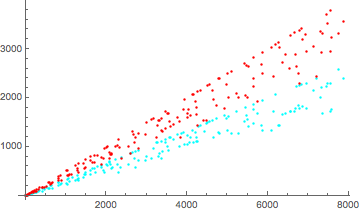}
	                \caption{{\scriptsize First 8{,}000 twin primes}}
	                \label{fig:TP06}
	        \end{subfigure}
	        \\
		\begin{subfigure}[b]{0.475\textwidth}
	                \centering
	                \includegraphics[width=\textwidth]{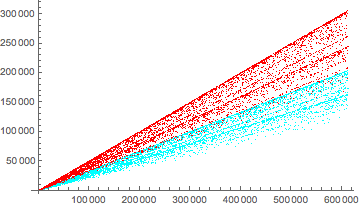}
	                \caption{{\scriptsize First 600{,}000 twin primes}}
	                \label{fig:TP09}
	        \end{subfigure}
	        \quad
	        		\begin{subfigure}[b]{0.475\textwidth}
	                \centering
	                \includegraphics[width=\textwidth]{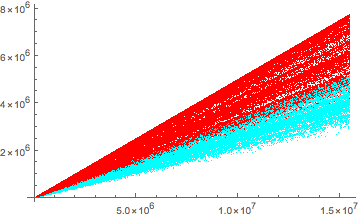}
	                \caption{{\scriptsize First 1.5 million twin primes}}
	                \label{fig:TP12}
	        \end{subfigure}
	        \caption{Plots in the $xy$-plane of ordered pairs
	        		{\color{red} $(p,\phi(p-1))$} (in red) and {\color{cyan}$(p+2,\phi(p+1))$}
			(in cyan) for twin primes $p,p+2$.  There are no exceptional pairs visible in 
			Figure \ref{fig:TP03}; that is, $\phi(p-1) \geq \phi(p+1)$ in each case.  			
			The exceptional pairs
			$2381, 2383$ and $3851, 3853$ are visible in Figure \ref{fig:TP06}.
			A smattering of exceptional pairs emerge as more twin primes are considered.}
		\label{Figure:BlueRed}
	\end{figure}

\section{An heuristic argument}

We give an heuristic argument which suggests that 
$\phi(p-1) \geq \phi(p+1)$ for an overwhelming proportion of twin primes $p, p+2$. 
It also identifies specific conditions under which $\phi(p-1) < \phi(p+1)$ might occur.  
This informal reasoning can be made rigorous under the assumption of the Bateman--Horn conjecture
(see Section \ref{Section:Proof}).

Observe that each pair of twin primes, aside from $3,5$, is of the form $6n-1,6n+1$.  Thus, if $p,p+2$
are twin primes with $p \geq 3$, then $2|(p-1)$ and $6|(p+1)$.  We use this in the following lemma
to obtain an equivalent characterization of (un)exceptionality.

\begin{lemma}\label{Lemma:Trouble}
If $p$ and $p+2$ are prime and $p \geq 5$, then
\begin{equation}\label{eq:Trouble}
\phi(p-1) \,\geq \,\phi(p+1)
\quad\iff\quad
\frac{\phi(p-1)}{p-1}\, \geq \,\frac{\phi(p+1)}{p+1}.
\end{equation}
\end{lemma}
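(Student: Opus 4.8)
The key observation is that both sides of \eqref{eq:Trouble} are being compared via a common quantity. The plan is to divide the inequality $\phi(p-1) \geq \phi(p+1)$ by the positive number $(p-1)(p+1)$, or equivalently to manipulate the ratios. Writing $a = \phi(p-1)/(p-1)$ and $b = \phi(p+1)/(p+1)$, the right-hand inequality is $a \geq b$, while the left-hand inequality is $(p-1)a \geq (p+1)b$. These are not equivalent for arbitrary positive reals, so the argument must exploit the specific arithmetic of twin primes: namely that $p - 1$ and $p + 1$ differ by exactly $2$, and crucially (as noted just before the lemma) that $p + 1$ is divisible by $6$ while $p - 1$ is divisible by $2$ but \emph{not} by $3$.

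First I would record the structural facts. Since $p \geq 5$ and $p, p+2$ are both prime, $p \equiv 5 \pmod 6$, so $p - 1 \equiv 4 \pmod 6$ and $p + 1 \equiv 0 \pmod 6$. In particular $3 \mid (p+1)$ and $3 \nmid (p-1)$, so the prime $3$ contributes a factor $\phi(3)/3 = 2/3$ to $b$ but not to $a$. The function $m \mapsto \phi(m)/m = \prod_{q \mid m}(1 - 1/q)$ depends only on the radical of $m$, and it is at most $1$, with value exactly $1$ only for $m = 1$ (which cannot occur here since $p - 1 \geq 4$). So $0 < a \leq 1/2$ and $0 < b \leq 1/3$, the latter because $b$ always carries the factors from $2$ and $3$. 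This gap — $b$ is forced to be noticeably smaller than $a$ could be — is what makes the equivalence work.

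Next, the proof itself. Suppose $a \geq b$. Then $(p-1)a \geq (p-1)b$, and I want $(p-1)a \geq (p+1)b$, i.e.\ it suffices to show $(p-1)(a - b) \geq 2b$, which would follow if $a - b$ is large enough; but when $a = b$ this fails, so the forward direction is the delicate one and I should instead argue the contrapositive or use sharper information. Let me reorganize: the cleanest route is to prove the right-hand side implies the left-hand side directly from $b \leq 1/3$ and $a \geq 1/2$ is \emph{not} available — instead I would note that $\phi(p-1) = (p-1)a$ and $\phi(p+1) = (p+1)b$ are integers, and that $(p+1)b = \phi(p+1) \leq \phi(p+1)$ is even (in fact divisible by $\phi(6) = 2$), combined with a size estimate. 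The genuinely clean statement is: since $3 \mid (p+1)$, we have $b \leq \frac{2}{3}\cdot\frac{1}{2} = \frac13$ when $6 \mid p+1$; meanwhile $\frac{p+1}{p-1} = 1 + \frac{2}{p-1} \leq 1 + \frac{2}{4} = \frac32$ for $p \geq 5$. Therefore $\phi(p+1) = (p+1)b \leq \frac32 (p-1) b$, so if $a \geq b$ then $\phi(p-1) = (p-1)a \geq (p-1)b \geq \frac{2}{3}\phi(p+1)$ — which is not quite enough on its own. The honest fix is that one needs $\frac{p+1}{p-1} \leq \frac{1}{b}\cdot(\text{something})$; the real content, and the step I expect to be the main obstacle, is showing $\frac{p+1}{p-1} < \frac{a}{b} + \text{slack}$ cannot be escaped — i.e.\ verifying that whenever $a \geq b$ strictly or $a = b$, the factor $(p+1)/(p-1) \leq 3/2$ is dominated. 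I would handle this by checking that $a \geq b$ forces $a \geq \frac{p+1}{p-1} b$ because $a - b$ is either $0$ (and then $b \leq 1/3$ while $\frac{2}{p-1}b \leq \frac{2}{p-1}\cdot\frac13$ must be absorbed — requires $0 \geq \frac{2}{p-1}b$, false, so $a = b$ is actually impossible for the $\geq$ direction unless... ) — so in fact I would split into the cases $a > b$ and $a = b$, showing $a = b$ with $6 \mid p+1$ is impossible (since then $3 \mid \phi(p+1)$-type divisibility forces $3 \mid$ something in $p-1$), reducing to $a > b$, where the integrality of $\phi(p\pm1)$ and the bound $(p+1)/(p-1) \leq 3/2$ give $(p-1)a \geq (p-1)(b + \tfrac{1}{p-1}) \cdot$ adjustments. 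For the reverse direction, if $\phi(p-1) \geq \phi(p+1)$ then dividing by $p-1$ and using $p+1 > p-1$ gives $a = \phi(p-1)/(p-1) \geq \phi(p+1)/(p-1) > \phi(p+1)/(p+1) = b$ immediately — so the $(\Leftarrow)$ direction is the trivial one, and all the work is in $(\Rightarrow)$.

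In summary: the backward implication $\frac{\phi(p-1)}{p-1} \geq \frac{\phi(p+1)}{p+1} \Rightarrow \phi(p-1) \geq \phi(p+1)$ follows at once because $p + 1 > p - 1 > 0$. The forward implication is where the twin-prime structure $6 \mid p+1$, $3 \nmid p-1$ enters: it guarantees $\phi(p+1)/(p+1) \leq 1/3$ and $\phi(p-1)/(p-1) \leq 1/2$, and one leverages $\frac{p+1}{p-1} \leq \frac{3}{2}$ for $p \geq 5$ together with the integrality of the totient values to close the gap. I expect the case-analysis around $\phi(p-1)/(p-1) = \phi(p+1)/(p+1)$ — i.e.\ the equality/boundary case — to be the one requiring the most care, and I would dispatch it by a direct divisibility argument using $3 \mid (p+1)$.
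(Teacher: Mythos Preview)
Your summary paragraph has the two directions reversed. In the body you correctly observe that from $\phi(p-1) \geq \phi(p+1)$ one gets
\[
\frac{\phi(p-1)}{p-1} \,\geq\, \frac{\phi(p+1)}{p-1} \,>\, \frac{\phi(p+1)}{p+1}
\]
immediately; this is the \emph{forward} implication $(\Rightarrow)$, and it is the trivial one (the paper says exactly this). But in your summary you then assert that ``$\frac{\phi(p-1)}{p-1} \geq \frac{\phi(p+1)}{p+1} \Rightarrow \phi(p-1) \geq \phi(p+1)$ follows at once because $p+1 > p-1$,'' which is false: from $a \geq b$ and $p+1 > p-1$ you obtain only $(p-1)a \geq (p-1)b$, not $(p-1)a \geq (p+1)b$. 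This reverse implication is precisely the direction that requires the twin-prime structure, as your own earlier paragraph recognized.

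For that nontrivial direction you identify the right ingredients---$\phi(p-1) \leq \tfrac{1}{2}(p-1)$, $\phi(p+1) \leq \tfrac{1}{3}(p+1)$, and integrality of $\phi(p-1)-\phi(p+1)$---but never assemble them, instead drifting into an unnecessary case split on whether $a=b$. The paper's argument is a two-line computation: the ratio inequality is equivalent to $(p+1)\phi(p-1) \geq (p-1)\phi(p+1)$, i.e.
\[
0 \,\leq\, p\bigl(\phi(p-1)-\phi(p+1)\bigr) + \phi(p-1) + \phi(p+1);
\]
bounding the last two terms by $\tfrac{1}{2}(p-1)+\tfrac{1}{3}(p+1) < \tfrac{5}{6}p$ yields $\phi(p-1)-\phi(p+1) > -\tfrac{5}{6}$, and since this difference is an integer it must be nonnegative. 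No case analysis on $a=b$ versus $a>b$, and no use of $(p+1)/(p-1) \leq 3/2$, is needed.
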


\begin{proof}
The forward implication is straightforward arithmetic, so we focus on the reverse.
If the inequality on the right-hand side of \eqref{eq:Trouble} holds, then
\begin{align*}
0 
&\leq p \big(\phi(p-1) - \phi(p+1)\big) + \phi(p-1) + \phi(p+1) \\
&\leq p \big(\phi(p-1) - \phi(p+1)\big) + \tfrac{1}{2}(p-1) + \tfrac{1}{3}(p+1) \\
&< p \big(\phi(p-1) - \phi(p+1)\big) + \tfrac{5}{6}p
\end{align*}
since $2|(p-1)$ and $6|(p+1)$.  For the preceding to hold,
the integer $\phi(p-1) - \phi(p+1)$ must be nonnegative.
\end{proof}

In light of \eqref{eq:Trouble} and the formula (in which $q$ is prime)
\begin{equation*}
\frac{\phi(n)}{n}  = \prod_{q|n} \left(1- \frac{1}{q} \right),
\end{equation*}
it follows that $p$ is exceptional if and only if $p+2$ is prime and
\begin{equation}\label{eq:Condition}
 \frac{1}{2}\prod_{\substack{q|(p-1)\\q\geq 5}} \left(1 - \frac{1}{q} \right)
\quad<\quad
\frac{1}{3}\prod_{\substack{q|(p+1)\\q\geq 5}} \left(1 - \frac{1}{q} \right)
\end{equation}
because $2|(p-1)$, $3\nmid(p-1)$ and $6|(p+1)$.
The condition \eqref{eq:Condition} can occur if $p-1$ is divisible by only small primes.
For example, if $5,7,11 | (p-1)$, then $5,7,11 \nmid (p+1)$ and the quantities in \eqref{eq:Condition} become
\begin{equation*}
\frac{24}{77}\prod_{\substack{q|(p-1)\\q\geq 13}} \left(1 - \frac{1}{q} \right)
\quad \text{and} \quad
\frac{1}{3}\prod_{\substack{q|(p+1)\\q\geq 13}} \left(1 - \frac{1}{q} \right).
\end{equation*}
Since
\begin{equation*}
\frac{24}{77} \approx 0.3117 < \frac{1}{3}
\qquad \text{and} \qquad 
2 \cdot 5 \cdot 7 \cdot 11 = 770,
\end{equation*}
one expects \eqref{eq:Condition} to hold occasionally if $p = 770n + 1$.
Dirichlet's theorem on primes in arithmetic progressions ensures that $p+2 = 770n+3$ is prime
$1/\phi(770) = 1/240 = 0.4167\%$ of the time.  Thus, we expect a small proportion of twin prime pairs to
satisfy \eqref{eq:Condition}.
For example,
among the first $100$ exceptional pairs (see Table \ref{Table:First100}), the following values of $p$ have the form $770n+1$:
\begin{quote}
3851, 20021, 26951, 47741, 50051, 52361, 70841, 87011, 98561, 117041, 165551, 167861, 197891, 225611, 237161, 241781, 274121, 278741, 301841, 315701, 322631, 345731, 354971, 357281, 361901, 371141, 410411, 424271, 438131, 440441, 470471.
\end{quote}
This accounts for $31\%$ of the first $100$ exceptional pairs.  We now make this heuristic argument rigorous,
under the assumption that the Bateman--Horn conjecture holds.

\section{Proof of Theorem \ref{Theorem:Main}}\label{Section:Proof}

Assume that the Bateman--Horn conjecture holds. We first prove statement
(a) of Theorem \ref{Theorem:Main}.
In what follows, $p,q,r$ denote prime numbers.
\medskip

\noindent\textbf{Proof of (a).}
Consider twin primes $p,p+2$ such that $5,7,11|(p-1)$. 
Let $\pi_{2}'(x)$ be the number of such $p\leq x$.
\medskip

\noindent\textbf{Step 1.}
Since $5 \cdot 7 \cdot 11 = 385$,
the desired primes are precisely those of the form
\begin{equation*}
\text{$n = 385k+1\leq x$ \quad such that \quad $n+2 = 385 k + 3$ is prime}.
\end{equation*}
In the Bateman--Horn conjecture, let 
\begin{equation*}
f_1(t)=385 t+1, \qquad f_2(t)=385 t+3, \quad \text{and} \quad f = f_1 f_2.
\end{equation*}
Then
\begin{equation}\label{eq:Nf1}
N_f(p) = 
\begin{cases}
1 & \text{if $p=2$},\\ 
2 & \text{if $p=3$},\\ 
0 & \text{if $p=5,7,11$},\\
2 & \text{if $p \geq 13$}.
\end{cases}
\end{equation}
Since $p\leq x$, we must have $k\leq (x-1)/385$.  
For sufficiently large $x$, the Bateman--Horn conjecture predicts that the number of such $k$ is
\begin{align}
\pi_{2}'(x) 
& =  (1+o(1)) \frac{(x-1)/385}{(\log ((x-1)/385))^2} \prod_{p\geq 2} \left(\frac{1-N_f(p)/p}{(1-1/p)^2}\right) \nonumber\\
& =  (1+o(1)) \left(\frac{2x}{385(\log x)^2}\right) \prod_{p\geq 3} \left(\frac{1-N_f(p)/p}{(1-1/p)^2}\right)\nonumber\\
& =  (1+o(1)) \left(\frac{2x}{385(\log x)^2}\right) \prod_{p=5,7,11} \left( \frac{1}{(1- 1/p)^2} \right) \prod_{\substack{p\geq 13\\\text{or $p =3$}}} \left( \frac{1 - 2/p}{ (1 -1/p)^2} \right)\nonumber \\
& =  (1+o(1)) \left(\frac{2x}{385(\log x)^2}\right)  \prod_{p\geq 3} \left( \frac{1 - 2/p}{ (1 -1/p)^2} \right) \prod_{p=5,7,11} \left( 1 - 2/p\right)^{-1}\nonumber\\
& =  (1+o(1)) \left(\frac{2x}{385(\log x)^2}\right)   \prod_{p\geq 3} \left( \frac{p(p-2)}{ (p-1)^2} \right) \frac{5 \cdot 7\cdot 11}{(5-2)(7-2)(11-2)}\nonumber \\
& =  (1+o(1)) \frac{ 2C_2 x}{135(\log x)^2} \nonumber \\
& =  (1+o(1)) \frac{\pi_2(x)}{135} \nonumber\\
&>0.00740740\, \pi_2(x). \label{eq:p2p}
\end{align}

\noindent\textbf{Step 2.}
Fix a prime $r \geq 13$.
Let $\pi_{2,r}'(x)$ be the number of primes $p\leq x$ such that
$p,p+2$ are prime, $5,7,11|(p-1)$, and $r|(p+1)$. 
The desired primes are precisely those of the form
\begin{equation*}
\text{$n = 385 k+1\leq x$  \quad such that \quad $n+2 = 385 k + 3$ is prime
and $r|(385 k + 2)$}.
\end{equation*}
In particular, $k$ must be of the form
\begin{equation*}
k = k_0 + r \ell, 
\end{equation*}
in which $k_0$ is the smallest positive integer with $k_0\equiv -2(385)^{-1}  \pmod r$.  
Let $b_{r}=385 k_0+1$.  Then
\begin{equation}\label{eq:APr}
n=385 r \ell+ b_r
\qquad \text{and}\qquad
n+2=385 r\ell+(b_r + 2),
\end{equation}
are both prime, $n\leq x$, and
\begin{equation*}
\ell\leq \frac{x-b_{r}}{385 r}  .
\end{equation*}
In the Bateman--Horn conjecture, let 
\begin{equation*}
f_1(t)=385 rt+b_{r},  \qquad f_2(t)=385 r t+(b_{r}+2),
\quad\text{and}\quad f = f_1f_2.
\end{equation*}
Then $N_f(p)$ is as in \eqref{eq:Nf1} except for $p=r$, in which case $N_f(r) = 0$.
Indeed, 
\begin{equation*}
f_1(t) \equiv b_r \equiv 385k_0 + 1 \equiv -1 \pmod{r}
\quad\text{and}\quad
f_2(t) \equiv b_r + 2 \equiv 1 \pmod{r}
\end{equation*}
for all $t$.
As $x \to \infty$, the Bateman--Horn conjecture 
predicts that the number of such $\ell$ is
\begin{align}
\pi_{2,r}'(x) 
& =  (1+o(1)) \frac{(x-b_r)/(385 r)}{( \log( (x - b_r)/(385 r)))^2 } \prod_{p \geq 2} \left( \frac{1 - N_f(p)/p}{(1 - 1/p)^2} \right) \nonumber \\
&=  (1+o(1)) \frac{x}{ 385r (\log x )^2 } \prod_{p \geq 2} \left( \frac{1 - N_f(p)/p}{(1 - 1/p)^2} \right) \nonumber \\
&=  (1+o(1)) \frac{2x}{ 385r (\log x )^2 } \prod_{p \geq 3} \left( \frac{1 - N_f(p)/p}{(1 - 1/p)^2} \right) \nonumber \\
& =  (1+o(1)) \frac{2x}{385 r (\log x)^2} \prod_{p=5,7,11,r} \left( \frac{1}{(1- 1/p)^2} \right) \prod_{p=5,7,11,r} \left( \frac{1 - 2/p}{ (1 -1/p)^2} \right) \nonumber\\
& =  (1+o(1)) \left(\frac{2x}{385r(\log x)^2}\right)   \prod_{p\geq 3} \left( \frac{p(p-2)}{ (p-1)^2} \right) \frac{5 \cdot 7\cdot 11 \cdot r}{(5-2)(7-2)(11-2)(r-2)} \nonumber \\
& =  (1+o(1)) \frac{2C_2x}{135(r-2)(\log x)^2} \nonumber \\
& =  (1+o(1)) \frac{\pi_2(x)}{135(r-2)}.\label{eq:Pi2xphi2ayr}
\end{align}

\noindent\textbf{Step 3.}
Suppose that $p$ is counted by $\pi_{2}'(x)$; that is,
suppose that $p,p+2$ are prime and that $5,7,11 | (p-1)$.  Then
$6|(p+1)$, $5,7,11 \nmid (p+1)$, and
\begin{equation*}
\frac{\phi(p-1)}{p-1}\leq \prod_{q=2,5,7,11} \left(1-\frac{1}{q}\right)=\frac{24}{77}.
\end{equation*}
If the pair $p$ is unexceptional, then Lemma \ref{Lemma:Trouble} ensures that
\begin{equation*}
 \frac{1}{3}\prod_{\substack{r\mid (p+1)\\ r\geq 13}} \left(1-\frac{1}{r}\right) = \frac{\phi(p+1)}{p+1} \leq \frac{\phi(p-1)}{p-1}\leq  \frac{24}{77}.
\end{equation*}
Consequently,
\begin{equation*}
\prod_{\substack{r| (p+1)\\ r\geq 13}} \left(1+\frac{1}{r-1}\right) \geq  \frac{77}{72},
\end{equation*}
in which $r$ is prime. Let
\begin{equation*}
F(p)
=\sum_{\substack{r| (p+1)\\ r\geq 13}} \log\left(1+\frac{1}{r-1}\right).
\end{equation*}

\noindent\textbf{Step 4.}
We want to count the twin primes pairs $p,p+2$ with $p\leq x$, $F(p) \geq \log (77/72)$, and
$5,7,11|(p-1)$. 
To do this, we sum up $F(p)$ over all twin primes $p$ counted by $\pi_{2}'(x)$ and change the order of summation to get
\begin{align}
A(x)
&=\sum_{\substack{\text{$p$ counted by}\\ \pi_{2}'(x)}} F(p) \nonumber\\
&=\sum_{r \geq 13} \pi_{2,r}'(x)  \log\left(1+\frac{1}{r-1}\right) \nonumber\\
& \leq  \sum_{13\leq r\leq z} \pi_{2,r}'(x) \log \left(1+\frac{1}{r-1}\right)\nonumber\\
&\qquad\qquad  +  \sum_{z<r\leq (\log x)^3} \pi_{2,r}'(x)\log\left(1+\frac{1}{r-1}\right)\nonumber\\
 & \qquad\qquad\qquad +  \sum_{(\log x)^3<r\leq x} \pi_{2,r'}(x)\log\left(1+\frac{1}{r-1}\right) \nonumber\\
 & = A_1(x)+A_2(x)+A_3(x), \label{eq:AAA}
\end{align}
in which $z$ is to be determined later.  We bound the three summands separately.
\begin{enumerate} 
\item If $13\leq r\leq z$, then \eqref{eq:Pi2xphi2ayr} asserts that
\begin{equation*}
 \pi_{2,r}'(x) = (1+o(1))\frac{\pi_2(x)}{135(r-2)}
\end{equation*}
uniformly for $r \in [13,z]$ as $x \to \infty$. 
For sufficiently large $x$ we have\footnote{Since $\log (1+t) \leq t$ for $t > 0$,
the terms of the series are $O(1/r^2)$ and hence it converges rapidly enough for reliable numerical evaluation.  \texttt{Mathematica} provides
the value $0.0241503330316$.} 
\begin{align*}
A_1(x) 
& \leq  (1+o(1)) \frac{\pi_2(x)}{135} \left(\sum_{13\leq r\leq z} \frac{1}{(r-2)} \log\left(1+\frac{1}{r-1}\right)\right)\\
& \leq  (1+o(1)) \frac{\pi_2(x)}{135} \left(\sum_{r \geq 13} \frac{1}{(r-2)} \log\left(1+\frac{1}{r-1}\right)\right)\\
& \leq  (1+o(1))  \frac{0.0241504}{135} \pi_2(x) \\
& <  0.000178892 \,\pi_2(x).
\end{align*}

\item If $z<r\leq (\log x)^3$, we use the Brun sieve and manipulations similar to those used to obtain
\eqref{eq:Pi2xphi2ayr} to find an absolute constant $K$ such that
\begin{equation*}
\pi_{2,r}'(x) \leq  \frac{K(x/(135r))}{(\log(x/(135r)))^2} 
\end{equation*}
for sufficiently large $x$.  Since $r\leq (\log x)^3$, 
\begin{equation*}
\log(x/(135 r))\geq \log (x^{1/2})\geq (\log x)/2
\end{equation*}
holds if $x \geq 10^{14}$.  Then \eqref{eq:HL} ensures that
\begin{equation*}
\pi_{2,r}'(x) \leq \frac{4Kx}{135r (\log x)^2} \leq \frac{5K \pi_{2}(x)}{135(r-2)}
\end{equation*}
for sufficiently large $x$.
Now we fix $z$ such that $5K/(135(z-2))< 10^{-9}$.
Since $\log(1+t)<t$ for $t>0$, for sufficiently large $x$ we obtain
\begin{align*}
A_2(x) 
&= \sum_{z<r\leq (\log x)^3} \pi_{2,r}'(x)\log\left(1+\frac{1}{r-1}\right) \\
& \leq  \frac{5K \pi_2(x)}{135} \sum_{r>z} \frac{1}{r-2} \log\left(1+\frac{1}{r-1}\right) \\
& <   \frac{5K\pi_2(x)}{135} \sum_{r>z} \frac{1}{(r-2)(r-1)}\\
&  =  \frac{ 5K \pi_2(x)}{135}  \sum_{r>z} \left(\frac{1}{r-2}-\frac{1}{r-1}\right)\\
& \leq  \frac{5K\pi_2(x)}{135(z-2)} \\
&< 10^{-9} \,\pi_2(x).
\end{align*}

\item Suppose that $(\log x)^3<r\leq x$. 
By \eqref{eq:APr}, the primes counted by $\pi_{2,r}'(x)$ lie in 
an arithmetic progression modulo $385r$.  Thus, their number is at most 
\begin{equation*}
\pi_{2,r}(x)\leq \left\lfloor \frac{x}{385 r}\right\rfloor+1\leq \frac{x}{385 r}+1.
\end{equation*}
Since $\log(1+t)<t$, for sufficiently large $x$ we obtain
\begin{align*}
A_3(x) 
&= \sum_{(\log x)^3<r\leq x} \pi_{2,r'}(x)\log\left(1+\frac{1}{r-1}\right) \\
& \leq   \sum_{(\log x)^3<r\leq x} \frac{1}{(r-1)} \left(\frac{x}{385 r}+1\right)\\
& \leq  \frac{x}{385} \sum_{r>(\log x)^3} \frac{1}{r(r-1)}+\sum_{(\log x)^3<r\leq x} \frac{1}{r-1}\\
& \leq  \frac{x}{385} \sum_{r > (\log x)^3} \left(\frac{1}{r-1}-\frac{1}{r}\right)+\int_{(\log x)^3-2}^{x} \frac{dt}{t}\\
& \leq  \frac{x}{385((\log x)^3-1)}+\left(\log t\Big|_{t=(\log x)^3-2}^{t=x}\right)\\
& \leq  \frac{2x}{385 (\log x)^3}+\log x \\
& =  \left( \frac{1}{385 C_2 \log x} +   \frac{(\log x)^3}{2C_2x} \right)\frac{2C_2 x}{(\log x)^2}\\
&= (1+o(1))\left(  \frac{1}{385 C_2 \log x} + \frac{ (\log x)^3}{2C_2 x} \right) \pi_2(x) \\
&<10^{-9}\, \pi_2(x).
\end{align*}
\end{enumerate} 

\noindent\textbf{Step 5.}
Returning to \eqref{eq:AAA} and using the preceding three estimates, we have
\begin{align*}
A(x) 
& =  A_1(x)+A_2(x)+A_3(x)\\
& <   0.000178892\, \pi_2(x)+ 10^{-9}\, \pi_2(x)+ 10^{-9}\, \pi_2(x)\\
&<   0.000179\, \pi_2(x).
\end{align*}
for sufficiently large $x$.
\medskip

\noindent\textbf{Step 6.}
Let $\mathcal{U}(x)$ be the set of primes $p$ counted by $\pi_{2}'(x)$ that are unexceptional;
that is, $\phi(p-1)/(p-1)\geq \phi(p+1)/(p+1)$ by Lemma \ref{Lemma:Trouble}.  As we have seen,
if $p\in \mathcal{U}(x)$, then $F(p) \geq \log(77/72)$. Thus,
\begin{equation*}
0 \leq \#\mathcal{U}(x) \log(77/72)\leq \sum_{p\in \mathcal{U}(x)} F(p)\leq A(x)\leq 0.000179\, \pi_2(x),
\end{equation*}
from which we deduce that
\begin{equation*}
\# \mathcal{U}(x)\leq \left(\frac{0.000179}{\log(77/72)}\right) \pi_2(x) <0.002667\, \pi_2(x).
\end{equation*}
The primes $p$ counted by $\pi_{2}'(x)$ which are not in $\mathcal{U}(x)$ are exceptional; that is
$\phi(p-1)/(p-1)<\phi(p+1)/(p+1)$.  By \eqref{eq:p2p} and the preceding calculation,
for large $x$ there are at least
\begin{align*}
\pi_2'(x)-\#\mathcal{U}(x) 
& >  \left( 0.00740740 -0.002667 \right) \pi_2(x) \\
&> 0.0047\, \pi_2(x)
\end{align*}
such primes.  This completes the proof of statement (a) from Theorem \ref{Theorem:Main}.

\medskip
\noindent\textbf{Proof of (b).}
This is similar to the preceding, although it is much simpler.
As before, $p,q,r$ denote primes.
If $p,p+2$ are prime and $p$ is exceptional, then
\begin{equation*}
\frac{1}{2} \prod_{ \substack{ r | (p-1) \\ r \geq 5}} \left( 1 - \frac{1}{r} \right) = \frac{ \phi(p-1)}{p-1} 
\leq \frac{ \phi(p+1) }{p+1} \leq \frac{1}{3}
\end{equation*}
since $3\nmid(p-1)$ and $6|(p+1)$.  If we let
\begin{equation*}
G(p)=\sum_{\substack{r|(p-1)\\r\geq 5}} \log\left(1+\frac{1}{r-1}\right),
\end{equation*}
then $G(p)\geq \log(3/2)$ holds for all exceptional primes $p$.
Let $\pi_e(x)$ denote the number of exceptional primes $p \leq x$.
Then
\begin{align*}
\pi_e(x)\log(3/2) 
& \leq  \sum_{\substack{\text{$p$ counted}\\\text{by $\pi_2(x)$}}} G(p) \\
& = \sum_{\substack{\text{$p$ counted}\\\text{by $\pi_2(x)$}}} \sum_{\substack{r\geq 5\\ r|(p-1)}} \log\left(1+\frac{1}{r-1}\right)\\
& \leq  \sum_{5\leq r\leq x} \log\left(1+\frac{1}{r-1}\right) \sum_{\substack{p~{\text{\rm counted~by}}~\pi_2(x)\\ p\equiv 1\pmod r}} 1\\
& \leq  (1+o(1)) \pi_2(x)\sum_{r\geq 5} \frac{1}{(r-2)}\log\left(1+\frac{1}{r-1}\right)\\
& <  0.14137\,\pi_2(x),
\end{align*}
which shows that there are at least
\begin{equation*}
\pi_2(x)- \pi_e(x)\geq \pi_2(x)\left(1-\frac{0.14137}{\log(3/2)} \right)>0.6513\,\pi_2(x)
\end{equation*}
unexceptional primes at most $x$. \qed

\section{Conjectured density}\label{Section:Conjecture}

Below we conjecture a value for the density of the exceptional primes relative to the twin primes.
In what follows, we let
$P(n)$ denote the largest prime factor of $n$ and let
$p(n)$ denote the smallest.  We let $\mu$ denote the
M\"obius function and remind the reader that $\mu^2(n) = 1$ if and only if $n = 1$ or $n$ is the product
of distinct primes.

\begin{conjecture}\label{Conjecture:Formula}
The density of the exceptional twin primes is 
\begin{equation}\label{eq:Conjecture}
\lim_{x\to\infty} \frac{\pi_e(x)}{\pi_2(x)}=
\lim_{\epsilon\to 0} \prod_{5\leq q\leq \frac{1}{\epsilon}} \left(\frac{q-4}{q-2}\right) 
\Bigg(\!\!\!\!\!\!\!\!\!\!\!\!\sum_{\substack{a,b\\ \mu^2(ab)=1\\ 5\leq p(ab)\leq P(ab)\leq \frac{1}{\epsilon} \\ \frac{\phi(a)}{2a}\leq \frac{\phi(b)}{3b}}} \!\!\!\!\!\!\!\! \prod_{p\mid ab} \left(
\frac{1}{p-4}\right)\Bigg).
\end{equation}
\end{conjecture}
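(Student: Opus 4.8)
\medskip

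\noindent\textbf{Sketch of a derivation.}
Since \eqref{eq:Conjecture} is a conjecture, the plan is to present the heuristic that produces its right-hand side and to isolate the step that resists a proof. First I would record the shape of the exceptionality condition. By Lemma~\ref{Lemma:Trouble} and the reasoning around \eqref{eq:Condition}, if we write $a$ for the product of the distinct primes $q\ge5$ dividing $p-1$ and $b$ for the product of the distinct primes $q\ge5$ dividing $p+1$, then $\mu^2(ab)=1$ automatically, and a twin prime $p\ge5$ is exceptional if and only if $\frac{\phi(a)}{2a}\le\frac{\phi(b)}{3b}$ (equality never occurs, since the largest prime factor of $a$ would then have to divide $3b$). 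Thus $\pi_e(x)/\pi_2(x)$ ought to converge to the probability, in a suitable probabilistic model for twin primes, that this inequality holds.

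Next I would build that model one prime at a time and recognise \eqref{eq:Conjecture}. For a prime $q\ge5$, a twin prime $p$ avoids the residues $0$ and $-2$ modulo $q$, and Bateman--Horn, applied to the finitely many linear pairs $qt+j,\,qt+j+2$, predicts that $p$ is equidistributed among the remaining $q-2$ classes; heuristically, then, $\Pr[q\mid p-1]=\Pr[q\mid p+1]=\frac1{q-2}$ and $\Pr[q\nmid p^2-1]=\frac{q-4}{q-2}$, with distinct primes behaving independently. Hence, for fixed coprime squarefree $a,b$ whose prime factors lie in $[5,1/\epsilon]$, the chance that $a$ and $b$ are exactly the parts of $p-1$ and $p+1$ supported on $[5,1/\epsilon]$ is
\[
\prod_{q\mid a}\frac1{q-2}\;\prod_{q\mid b}\frac1{q-2}\;\prod_{\substack{5\le q\le1/\epsilon\\ q\nmid ab}}\frac{q-4}{q-2}
\;=\;\prod_{5\le q\le1/\epsilon}\frac{q-4}{q-2}\;\prod_{p\mid ab}\frac1{p-4},
\]
since each prime dividing $ab$ trades the factor $\frac{q-4}{q-2}$ it would otherwise carry for $\frac1{q-2}$, i.e.\ contributes $\frac1{q-4}$. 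Summing over all $(a,b)$ with $\frac{\phi(a)}{2a}\le\frac{\phi(b)}{3b}$ reproduces exactly the $\epsilon$-truncated expression inside the outer limit of \eqref{eq:Conjecture}, and one then lets $\epsilon\to0$. As a consistency check, dropping the inequality collapses the sum to $1$, because $\sum_{a,b}\prod_{p\mid ab}\frac1{p-4}=\prod_{q}\frac{q-2}{q-4}$ cancels the prefactor, so the model has total mass $1$.

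Every ingredient above is rigorous under Bateman--Horn for a \emph{fixed} $\epsilon$: the twin primes $p\le x$ with prescribed truncated radicals $a,b$ lie in a finite union of arithmetic progressions modulo $6\prod_{5\le q\le1/\epsilon}q$, so counting them is a finite combination of Bateman--Horn predictions, exactly as in Steps~1--2 of the proof of Theorem~\ref{Theorem:Main}. The main obstacle, and the reason this is only a conjecture, is the interchange of the limits in $x$ and $\epsilon$: one must show that the primes $q>1/\epsilon$ dividing $p\pm1$ flip the sign of $\frac{\phi(p-1)}{p-1}-\frac{\phi(p+1)}{p+1}$ for only a vanishing proportion of twin primes, uniformly in $x$ as $\epsilon\to0$; in effect, this says that $\frac{\phi(p-1)}{p-1}$ and $\frac{\phi(p+1)}{p+1}$ do not cluster near each other along twin primes, with an explicit rate. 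The sieve upper bounds of Step~4 of the proof of Theorem~\ref{Theorem:Main} bound the total mass carried by large prime divisors of $p\pm1$, but not the \emph{borderline} twin primes on which a single large prime decides exceptionality; handling those seems to require Bateman--Horn with error terms uniform over a growing family of polynomials, which is beyond current methods. That is the step I expect to be the true difficulty; absent it, \eqref{eq:Conjecture} remains a conjecture, though its value (conjecturally near $2\%$) is consistent with Figures~\ref{Figure:Ratio} and~\ref{Figure:BlueRed}.
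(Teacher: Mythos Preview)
Your heuristic is essentially the paper's own: truncate the prime factors of $p\pm1$ at $1/\epsilon$, count twin primes of each $(a,b)$-type via Bateman--Horn to obtain exactly the displayed product, and isolate the interchange of limits (equivalently, the borderline twin primes with $|\phi(p-1)/(p-1)-\phi(p+1)/(p+1)|=O(\epsilon)$) as the unproven step. The paper frames this last point as an explicit ``Additional assumption,'' notes that the analogous statement for primes (without the twin condition) is a theorem, and suggests a Tur\'an--Kubilius approach under Bateman--Horn; otherwise your derivation matches.
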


A few remarks about the imposing expression \eqref{eq:Conjecture} are in order.
First of all, 
for each fixed $\epsilon>0$, the sum involves only finitely many pairs $a,b$.
Indeed, the condition $\mu^2(ab) =1$ ensures that $ab$ is a product of distinct prime factors.
The restriction $5 \leq p(ab) \leq P(ab) \leq \frac{1}{\epsilon}$ implies that only finitely many prime
factors are available to form $a$ and $b$.  In principle, the right-hand side of \eqref{eq:Conjecture}
can be evaluated to arbitrary accuracy by taking $\epsilon$ sufficiently small.
Unfortunately, the number of terms involved in the sum grows rapidly as $\epsilon$ shrinks
and we are unable to obtain a reliable numerical estimate from \eqref{eq:Conjecture}.

As a brief ``sanity check,'' we also remark that the limit in \eqref{eq:Conjecture},
if it exists, is at most $1$.  Without the condition 
\begin{equation*}
\frac{\phi(a)}{2a}\leq \frac{\phi(b)}{3b} ,
\end{equation*}
the inner sum in \eqref{eq:Conjecture} is
\begin{align*}
\sum_{\substack{a,b\\ \mu^2(ab)=1\\ 5\leq p(ab)\leq P(ab)\leq \frac{1}{\epsilon}}}
\!\!\!\!\!\!\!\!\prod_{p\mid ab} \bigg(\frac{1}{p-4}  \bigg) \qquad
& =  \sum_{\substack{n\\ \mu^2(n)=1\\ 5\leq p(n)\leq P(n)\leq \frac{1}{\epsilon}}} 
\!\!\!\!\!\!\!\!2^{\omega(n)}\prod_{p\mid n} \bigg(\frac{1}{p-4}\bigg)\\ 
& =  \prod_{5\leq p\leq \frac{1}{\epsilon}} \left(1+\frac{2}{p-4}\right)\\
& =  \prod_{5\leq p\leq \frac{1}{\epsilon}} \left(\frac{p-2}{p-4}\right),
\end{align*}
which precisely offsets the first product in \eqref{eq:Conjecture}.  

To proceed, we need to generalize the functions $F$ and $G$ that appeared
in the proof of Theorem \ref{Theorem:Main}.  Let $\epsilon>0$ and define
\begin{equation*}
F_{\epsilon}(p)
= \sum_{\substack{r\mid (p+1)\\ r\geq \frac{1}{\epsilon}}} \log\left(1+\frac{1}{r-1}\right)
\qquad\text{and}\qquad 
G_{\epsilon}(p)
=\sum_{\substack{r\mid (p-1)\\ r\geq \frac{1}{\epsilon}}} \log\left(1+\frac{1}{r-1}\right).
\end{equation*}
Particular instances of these functions have appeared in the proof of Theorem \ref{Theorem:Main} with $\epsilon=1/5$ for $F_{\epsilon}$ (called $F$) and $\epsilon=1/13$ for $G_{\epsilon}$ (called $G$), respectively.

\begin{lemma}
For $\epsilon>0$, the number of twin primes $p\leq x$ such that 
$F_{\epsilon}(p)>\epsilon$
is $O((\log (\frac{1}{\epsilon}))^{-1}\pi_2(x))$. The same conclusion holds with $F_{\epsilon}$ replaced by $G_{\epsilon}$.
\end{lemma}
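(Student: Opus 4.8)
The strategy is to mimic exactly the counting argument used in Step 4 of the proof of Theorem~\ref{Theorem:Main}, but now with the generalized function $F_\epsilon$ in place of $F$. The key identity is that summing $F_\epsilon(p)$ over all twin primes $p \leq x$ and swapping the order of summation gives
\[
\sum_{\substack{\text{$p$ counted}\\\text{by $\pi_2(x)$}}} F_\epsilon(p)
= \sum_{r \geq 1/\epsilon} \log\left(1+\frac{1}{r-1}\right) \#\{p \leq x : p,p+2 \text{ prime}, r \mid (p+1)\}.
\]
The primes with $r \mid (p+1)$ lie in an arithmetic progression modulo $r$ (equivalently modulo $6r$, since also $6 \mid (p+1)$), so the Brun sieve bounds the inner count by $O(\pi_2(x)/(r-2))$ uniformly in $r$ up to roughly $(\log x)^3$, exactly as in cases (i)--(iii) of Step~4. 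The tail $r > (\log x)^3$ contributes $o(\pi_2(x))$ by the trivial bound $\lfloor x/(6r)\rfloor + 1$ as in case (iii). Collecting terms, the whole double sum is
\[
O\!\left(\pi_2(x) \sum_{r \geq 1/\epsilon} \frac{1}{(r-2)}\log\left(1+\frac{1}{r-1}\right)\right)
= O\!\left(\pi_2(x) \sum_{r \geq 1/\epsilon} \frac{1}{(r-1)(r-2)}\right),
\]
using $\log(1+t) < t$. The remaining point is to estimate $\sum_{r \geq 1/\epsilon} \frac{1}{(r-1)(r-2)}$, the sum being over primes. By partial summation against the prime counting function (or just by comparison with $\sum_{n \geq 1/\epsilon} 1/(n(\log n))$ via the prime number theorem), this tail is $O((\log(1/\epsilon))^{-1})$, which is the source of the claimed bound.

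Once the double sum is bounded by $O((\log(1/\epsilon))^{-1}\pi_2(x))$, the lemma follows by a Markov-type inequality: if $p$ is a twin prime with $F_\epsilon(p) > \epsilon$, it contributes at least $\epsilon$ to the sum, so
\[
\epsilon \cdot \#\{p \leq x : p,p+2 \text{ prime}, F_\epsilon(p) > \epsilon\}
\leq \sum_{\substack{\text{$p$ counted}\\\text{by $\pi_2(x)$}}} F_\epsilon(p)
= O\!\left(\frac{\pi_2(x)}{\log(1/\epsilon)}\right).
\]
Wait---this gives $O(\epsilon^{-1}(\log(1/\epsilon))^{-1}\pi_2(x))$, not $O((\log(1/\epsilon))^{-1}\pi_2(x))$; so I would need the sharper input that the double sum is actually $O(\epsilon(\log(1/\epsilon))^{-1}\pi_2(x))$. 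That stronger estimate does hold: $\sum_{r \geq 1/\epsilon} \frac{1}{(r-1)(r-2)} = O(\epsilon/\log(1/\epsilon))$, since the smallest prime $r \geq 1/\epsilon$ is itself of size $\asymp 1/\epsilon$ and the prime tail beyond it sums to $O(\epsilon/\log(1/\epsilon))$ by partial summation. Dividing by $\epsilon$ then recovers exactly $O((\log(1/\epsilon))^{-1}\pi_2(x))$. The identical argument with $p-1$ in place of $p+1$ (the relevant congruence $r \mid (p-1)$ again puts $p$ in a fixed residue class mod $r$) handles $G_\epsilon$, completing the proof.

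**Main obstacle.** The one genuinely delicate point is the uniformity in $r$ of the Brun sieve estimate for $\#\{p \leq x : p, p+2 \text{ prime}, r \mid (p\pm 1)\}$, and the need to split the range of $r$ at $(\log x)^3$ to control the error terms---precisely the three-case split already carried out in Step~4 of the main proof. Everything there goes through verbatim with $385$ replaced by $6$ (or just $1$, since imposing $5,7,11\mid(p-1)$ is no longer part of the setup), and with the lower cutoff $13$ replaced by $1/\epsilon$. The other mildly technical point is the number-theoretic estimate $\sum_{r > y, \, r \text{ prime}} \frac{1}{r^2} = O\!\big(\frac{1}{y \log y}\big)$, which is standard via partial summation against $\pi(t)$ and the prime number theorem; I would state it and cite it rather than prove it in detail.
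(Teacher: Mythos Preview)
Your proposal is correct and follows essentially the same approach as the paper: sum $F_\epsilon(p)$ over twin primes $p\leq x$, swap the order of summation, bound the inner count $\pi_2(x;r,1)$ via the Brun sieve, estimate the prime tail $\sum_{r>1/\epsilon} 1/(r-1)^2 = O(\epsilon/\log(1/\epsilon))$ by partial summation, and finish with the Markov inequality. The paper's presentation is slightly leaner in that it applies the Brun bound directly without the three-range split you import from Step~4 of Theorem~\ref{Theorem:Main}, since no explicit constant is required here and the convergent tail absorbs the large-$r$ contribution; but this is cosmetic, and your self-correction to the sharper estimate $O(\epsilon/\log(1/\epsilon))$ for the double sum is exactly what the paper obtains.
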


\begin{proof}
The argument is essentially already in the proof of Theorem \ref{Theorem:Main}. 
We do it only for $F_{\epsilon}(p)$ since 
the argument for $G_{\epsilon}(p)$ is similar. We sum $F_{\epsilon}(p)$ for $p\leq x$ with $p,p+2$ prime and use the fact that $\log(1+t)\leq t$ to obtain 
\begin{align*}
\sum_{\substack{p\leq x\\ \text{$p,p\!+\!2$ prime}}} F_{\epsilon} (p) 
\quad
& \leq  \sum_{\substack{p\leq x\\ \text{$p,p\!+\!2$ prime}}} 
\sum_{\substack{q\mid (p-1)\\ q> \frac{1}{\epsilon}}} \frac{1}{q-1}\\
& =  \sum_{q>\frac{1}{\epsilon}} \frac{1}{q-1}
\sum_{\substack{\text{$p,p\!+\!2$ prime} \\ p\equiv 1\pmod q}} 1\\
& =  \sum_{q>\frac{1}{\epsilon}} \frac{\pi_2(x,q, 1)}{q-1},
\end{align*}
in which $\pi_2(x;q,1)$ denotes the number of primes $p\leq x$ with $p,p+2$ prime and 
$p\equiv  1\pmod q$. 
By the usual argument, the number 
of twin primes $p,p+2$ with $p\leq x$ and
$p\equiv 1\pmod q$ equals the number of $t\leq x/q$ such that 
$qt+1$ and $qt+3$ are prime. The number of them is, by the Brun sieve, 
\begin{equation*}
\pi_2(x;q, 1) \ll \frac{x}{(q-1)(\log x)^2}.
\end{equation*}
The Prime Number Theorem and Abel summation reveal that
\begin{equation*}
\sum_{\substack{p\leq x\\ \text{$p,p+2$ prime}}} F_{\epsilon}(p)\ll \frac{x}{(\log x)^2}\sum_{q>  \frac{1}{\epsilon}}\frac{1}{(q-1)^2}\ll \frac{\epsilon \pi_2(x)}{\log(\frac{1}{\epsilon})}.
\end{equation*}
If we let
\begin{equation*}
{\mathcal A}_{\epsilon}=\{p: \text{$p,p\!+\!2$ prime and $F_{\epsilon}(p)>\epsilon$}\},
\end{equation*}
then
\begin{equation*}
 \#{\mathcal A}_{\epsilon}(x) \epsilon 
\,\,\, \leq\!\!\!\! \sum_{\substack{p\leq x\\ \text{$p,p\!+\!2$ prime}}} F_{\epsilon}(p)
 \ll \epsilon \bigg(\log\Big(\frac{1}{\epsilon}\Big) \bigg)^{-1}\pi_2(x),
\end{equation*}
which gives $\#{\mathcal A}_{\epsilon}(x)=O((\log(\frac{1}{\epsilon}))^{-1} \pi_2(x))$. 
\end{proof}

To justify our conjecture, we look at the $\frac{1}{\epsilon}$-part of $p^2-1$. We first let $\epsilon\leq 0.5$. We note that $2| (p-1)$, $2| (p+1)$ and $3| (p+1)$ 
for all twin primes $p\geq 5$. For two coprime square-free numbers $a,b$  with $5\leq p(ab)\leq P(ab)\leq \frac{1}{\epsilon}$, we say that the twin prime $p$ is of {\it $\frac{1}{\epsilon}$-type $(a,b)$} if 
\begin{equation*}
p-1  =  2^{\alpha} \prod_{q\mid a} q^{\alpha_q} \prod_{q>\frac{1}{\epsilon}} q^{\gamma_q}
\qquad \text{and} \qquad
p+1 =  2^{\beta} 3^{\gamma} \prod_{q\mid b} q^{\beta_q} \prod_{q>\frac{1}{\epsilon}} q^{\delta_q}
\end{equation*}
for some positive $\alpha,\beta,\gamma,\alpha_q$ and $\beta_q$ for $q\mid ab$ and nonnegative $\gamma_q,\delta_q$ for $q\geq \frac{1}{\epsilon}$. That is, 
the prime factors of $p-1$ that are $\leq \frac{1}{\epsilon}$ are exactly the ones dividing $2a$ and the prime factors of $p+1$ that are $\leq \frac{1}{\epsilon}$ are exactly the ones dividing $6b$. 

Given $\epsilon$ and $(a,b)$, let
\begin{equation*}
c_{a,b}=\prod_{\substack{5\leq q\leq \frac{1}{\epsilon}\\ q\nmid ab}} q.
\end{equation*}
Note that
\begin{equation*}
\frac{\phi(p-1)}{p-1}=\frac{1}{2} \frac{\phi(a)}{a} \prod_{\substack{q|( p-1)\\ q>\frac{1}{\epsilon}}}\left(1-\frac{1}{q}\right)
\qquad \text{and} \qquad 
\frac{\phi(p+1)}{p+1}=\frac{1}{3} \frac{\phi(b)}{b}
\prod_{\substack{q|(p+1)\\ q>\frac{1}{\epsilon}}} \left(1-\frac{1}{q}\right).
\end{equation*}
Since
\begin{equation*}
e^{-2y}<1-y<e^{-y} \qquad \text{for $y < \tfrac{1}{2}$},
\end{equation*}
it follows that 
\begin{equation*}
1-4\epsilon<e^{-2\epsilon}<e^{-F_{\epsilon}(p)}
=\prod_{\substack{q|( p-1)\\ q>\frac{1}{\epsilon}}} \left(1-\frac{1}{q}\right)
\end{equation*}
hold for all twin primes $p\leq x$ except the ones in ${\mathcal A}_{\epsilon}(x)$, a set of cardinality $O((\log(\frac{1}{\epsilon})^{-1}\pi_2(x))$.  Consequently,
\begin{equation*}
(1-4\epsilon) \frac{\phi(a)}{2a}\leq \frac{\phi(p-1)}{p-1}
\end{equation*}
holds for all but $O((\log(\frac{1}{\epsilon}))^{-1}\pi_2(x))$ twin primes $p\leq x$.  
Thus, the inequality 
\begin{equation*}
\frac{\phi(p-1)}{p-1}\leq \frac{\phi(p+1)}{p+1}
\end{equation*}
implies that
\begin{equation*}
\frac{\phi(a)}{2a}\leq (1-4\epsilon)^{-1}\frac{\phi(b)}{3b}.
\end{equation*}

Let us consider twin primes for which
\begin{equation}\label{eq:2}
\frac{\phi(b)}{3b}<\frac{\phi(a)}{2a}<(1-4\epsilon)^{-1} \frac{\phi(b)}{3b}
\end{equation}
occurs.  Since 
\begin{equation*}
\frac{\phi(a)}{2a}=\frac{\phi(p-1)}{p-1}(1+O(\epsilon))\qquad {\text{\rm and}}\qquad \frac{\phi(b)}{3b}=\frac{\phi(p+1)}{p+1}(1+O(\epsilon))
\end{equation*}
for all $p\leq x$ with $O((\log(\frac{1}{\epsilon}))^{-1}\pi_2(x))$ exceptions, it follows that twin primes $p\leq x$ for which \eqref{eq:2} holds have the additional property that
\begin{equation}
\label{eq:3}
\left|\frac{\phi(p-1)}{p-1}-\frac{\phi(p+1)}{p+1}\right|=O(\epsilon).
\end{equation}
Let ${\mathcal B}_{\epsilon}$ be the set of twin primes for which \eqref{eq:3} holds. We make the following additional assumption.

\medskip\noindent
{\bf Additional assumption}: {\it The number of twin primes $p\leq x$ for which \eqref{eq:3} holds is $O(h(\epsilon) \pi_2(x))$ 
for some function $h(y)$ with $h(y)\to 0$ as $y\to 0$}.

\medskip

The assumption \eqref{eq:3} has been shown to hold when $p$ is only a prime \cite{GaLu}. 
That is, the number of primes $p\leq x$ such that \eqref{eq:3} holds 
is at most $O(h(\epsilon)\pi(x))$, where $h(\epsilon)$ tends to zero when $\epsilon\to 0$.  
In fact, this was a crucial step in showing that $\phi(p-1)-\phi(p+1)$ has no bias if only $p$ is
assumed to be prime.

Proving this for primes uses the Turan--Kubilius theorem about the number of prime factors $q\leq y$ of $p\pm 1$ when $p$ is prime as the parameter $y$ tends to infinity and also Sperner's theorem from combinatorics. With some nontrivial effort, which involves proving first a Turan--Kubilius estimate for the number of distinct primes $q\leq 1/\epsilon$ of $p-1$ and $p+1$ when $p$ ranges over twin primes up to $x$, the same program can be applied to prove that the additional assumption holds under the Bateman--Horn conjectures. We do not give further details here.

Assume that the additional assumption holds. Then the set of twin primes $p\leq x$ such that 
\begin{equation*}
\frac{\phi(p-1)}{p-1}<\frac{\phi(p+1)}{p+1}
\end{equation*}
is within a set of cardinality $O(h(\epsilon)\pi_2(x))$ from the set of primes for which 
\begin{equation}\label{eq:4}
\frac{\phi(a)}{2a}<\frac{\phi(b)}{3b}.
\end{equation}
With this assumption, we proceed as in \cite[Sect.~2.11]{GaLu}. 
Fix $\frac{1}{\epsilon}$, $a$, $b$, and $c=c_{a,b}$. We also fix a residue class for $p$ modulo $c$ which is not $\{0,\pm 1,-2\}$. In this case we need to count natural numbers of the form
\begin{equation*}
abct+\kappa,
\end{equation*}
in which $\kappa$ is fixed such that 
\begin{itemize}
\item $abct+\kappa\leq x$,
\item $abct+\kappa$ and $abct+\kappa+2$ are prime,
\item $abct+\kappa-1$ are divisible by all primes in $a$ and coprime to $cb$,
\item $abct+\kappa+1$ is divisible by all primes in $b$ (and coprime to $ca$).
\end{itemize}
Observe that $\kappa$ is uniquely determined 
modulo $abc$ once it is determined modulo $c$. By the Bateman--Horn conjecture, this number is 
\begin{equation*}
(1+o(1))\pi_2(x)\prod_{p\mid abc} \frac{1}{(p-2)}.
\end{equation*}
We next sum this over all $q-4$ progressions modulo $q$ for which $abct+\kappa$ is not congruent 
modulo $q$ to some member of $\{0,\pm 1,-2\}$ and for all $q\mid c$ 
getting an amount of 
\begin{equation*}
(1+o(1)) \pi_2(x) \prod_{p\mid ab} \left(\frac{1}{p-2}\right) \prod_{p\mid c} \left(\frac{q-4}{q-2}\right)=(1+o(1))
\!\!\!\!\prod_{5\leq q\leq \frac{1}{\epsilon}} \left(\frac{q-4}{q-2}\right)
\prod_{q\mid ab} \left(\frac{1}{q-4}\right).
\end{equation*}
We now sum up over all pairs $a,b$ with 
\begin{equation*}
\frac{\phi(a)}{2a}<\frac{\phi(b)}{3b},
\end{equation*}
which yields a proportion of 
\begin{equation*}
(1+o(1))\prod_{5\leq q\leq \frac{1}{\epsilon}} \left(\frac{q-4}{q-2}\right)
\!\!\!\!\!\!\!\!\!\!\!
\sum_{\substack{a,b\\ 5\leq p(ab)\leq P(ab)\leq \frac{1}{\epsilon}\\ \frac{\phi(a)}{2a}<\frac{\phi(b)}{3b}}} 
\!\!\!\!\!\!\!\!\mu^2(ab) \prod_{p\mid ab} \left(\frac{1}{p-4}\right)
\end{equation*}
of $\pi_2(x)$ with a number of exceptions $p\leq x$ of counting function $O(h(\epsilon) \pi_2(x))$. This supports Conjecture \ref{Conjecture:Formula}.

\section{Comments}

We did not need the full strength of the Bateman--Horn conjecture, just the case $r=2$ and $D=1$ for certain specific pairs of linear polynomials 
$f_1(t)$ and $f_2(t)$.  Under this conjecture, we have seen that $\phi(p-1) \leq \phi(p+1)$ for a substantial majority of twin prime pairs
$p,p+2$.  

There are a few twin primes $p,p+2$ for which
\begin{equation}\label{eq:Equality}
\phi(p-1) = \phi(p+1).
\end{equation}
For only such $p \leq 100{,}000{,}000$ are
\begin{align*}
&5,\quad
11,\quad
71,\quad
2591, \quad
208{,}391,\quad
16{,}692{,}551,\quad
48{,}502{,}931, \quad
92{,}012{,}201 , \\
&249{,}206{,}231,\quad
419{,}445{,}251,\quad
496{,}978{,}301.
\end{align*}
The following result highlights the rarity of these twin primes.

\begin{theorem}
The number of primes $p\leq x$ with $p+2$ prime and  $\phi(p-1)=\phi(p+1)$ is 
$O(x/\exp((\log x)^{1/3})$.  
\end{theorem}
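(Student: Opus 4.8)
The plan is to exploit that $\gcd(p-1,p+1)=2$, so that the hypothesis $\phi(p-1)=\phi(p+1)=:N$ transfers divisibility information freely between $p-1$ and $p+1$, and then to dichotomize according to the size of $P^{+}(N)$, the largest prime factor of $N$. Fix $y=\exp((\log x)^{1/3})$. A preliminary reduction removes the twin primes $p\le x$ for which $\ell^{2}\mid p^{2}-1$ for some prime $\ell>y$: such $p$ lie in $p\equiv\pm1\pmod{\ell^{2}}$, and $\sum_{\ell>y}(x/\ell^{2}+1)\ll x/y$, already within budget. After this reduction, every prime $\ell>y$ dividing $p^{2}-1$ satisfies $\ell\,\|\,p-1$ or $\ell\,\|\,p+1$, hence contributes the factor $\ell-1$ to whichever of $\phi(p-1),\phi(p+1)$ it affects.

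\emph{Non-smooth case $P^{+}(N)>y$.} Write $Q=P^{+}(N)$, an odd prime exceeding $y$. From $Q\mid\phi(p-1)$ and $Q^{2}\nmid p-1$ it follows that some prime $\ell\mid p-1$ has $\ell\equiv1\pmod Q$; symmetrically some prime $m\mid p+1$ has $m\equiv1\pmod Q$, and $\ell\ne m$, $\ell,m>Q>y$. So $p$ is confined to one class modulo $\ell m$, and for $\ell m\le x^{1-\delta}$ the Brun sieve gives $\ll x/(\ell m(\log x)^{2})$ twin primes $\le x$ there. Summing over all triples $(Q,\ell,m)$, grouped by $Q$ and using $\sum_{\ell\le x,\ \ell\equiv1(Q)}1/\ell\ll(\log\log x)/Q$ (Brun--Titchmarsh and partial summation, valid for $Q\le x^{1/2}$; larger $Q$ force $\ell m>x$ and join the edge case below), one obtains at most
\[
\frac{x}{(\log x)^{2}}\sum_{Q>y}\Bigl(\ \sum_{\substack{\ell\le x\\ \ell\equiv1\,(Q)}}\frac1\ell\ \Bigr)^{2}\ \ll\ \frac{x\,(\log\log x)^{2}}{y\,(\log y)\,(\log x)^{2}}\ \ll\ \frac{x}{\exp((\log x)^{1/3})}.
\]

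\emph{Smooth case $P^{+}(N)\le y$.} Then $N$ is $y$-smooth, so (after the reduction) every prime factor of $p-1$ and of $p+1$ has a $y$-smooth predecessor. The key input is that the primes $q$ with $q-1$ $y$-smooth satisfy $\sum_{q>y,\ q-1\text{ }y\text{-smooth}}1/q\ll\log y$, which follows by partial summation from $\#\{q\le t:q-1\text{ is }y\text{-smooth}\}\le\Psi(t,y)$. Writing such an $n=p\mp1\le x$ as $sm$ with $s$ its $y$-smooth part and $m$ its $y$-rough part (necessarily a product of the exceptional primes above), a smooth-number count bounds the number of them by $\sum_{m}\Psi(x/m,y)\ll x\exp(-c(\log x)^{2/3}\log\log x)$ for some $c>0$. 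Adding the three contributions proves the theorem (in fact with a little room to spare in the exponent).

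I expect the main obstacle to be the bookkeeping in the non-smooth case when $\ell m>x^{1-\delta}$, where the Brun estimate degenerates: there one must instead regard $\ell$ as a prime value of $(p-1)/a$ with $a$ in a short range and sieve accordingly, and likewise control the $y$-rough parts $m$ in the smooth case. Everything else is routine — it reduces to counting twin primes in arithmetic progressions, where Brun's sieve plus Mertens-type sums for primes in progressions suffice — and the exponent $(\log x)^{1/3}$ just records the balance between the two regimes as $y$ varies.
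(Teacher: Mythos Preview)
Your approach differs substantially from the paper's. The paper's proof is very short: it invokes a theorem of Graham, Holt, and Pomerance stating that the number of $n\le x$ with $\phi(n)=\phi(n+2)$ lying \emph{outside} the parametric family $n=2(2r+1)$ (with $r+1,\,2r+1$ prime) is at most $x/\exp((\log x)^{1/3})$; it then observes that if in addition $n+1=p$ and $n+3=p+2$ are prime, membership in the family forces $r+1,\,2r+1,\,4r+3,\,4r+5$ to be simultaneously prime, and a glance modulo $3$ shows this happens only for $r=2$ (giving $p=11$).

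You are instead sketching a direct proof via the smooth/non-smooth dichotomy on $P^{+}(N)$ --- essentially the method underlying the Graham--Holt--Pomerance result itself. Most of the outline is sound; in the smooth case your claimed exponent $(\log x)^{2/3}\log\log x$ is slightly optimistic, but a Rankin bound with $\alpha=1-c/\log y$ applied to the set of integers all of whose prime factors $q$ have $q-1$ $y$-smooth gives $x\exp\bigl(-c(\log x)^{2/3}\bigr)$, which already suffices. The genuine gap is the edge region $\ell m>x^{1-\delta}$, which you describe as bookkeeping. It is more than that: the Graham--Holt--Pomerance parametric family sits precisely there (with $\ell=2r+1$, $m=r+1$, so $\ell m\asymp p^{2}$), and without the twin-prime hypothesis that family already contributes $\gg x/(\log x)^{2}$ solutions to $\phi(n)=\phi(n+2)$. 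Your proposed remedy --- sieve on the four linear forms $p,\,p+2,\,(p-1)/a,\,(p+1)/b$ and sum over small $a,b$ --- yields only $O\bigl(x/(\log x)^{2}\bigr)$ in general; it succeeds here only because $3\mid p+1$ forces $3\mid(p+1)/b$ whenever $3\nmid b$, collapsing those terms to $O(1)$. That is exactly the paper's mod-$3$ observation, so the two routes ultimately hinge on the same idea; the paper simply isolates it cleanly rather than embedding it inside a rederivation of the background machinery.
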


\begin{proof}
Suppose that $j$ and $j+k$ have the same prime factors,
let $g = (j,j+k)$, and suppose that
\begin{equation}\label{eq:jgr}
\frac{j}{g}r + 1 \qquad \text{and} \qquad \frac{j+k}{g}r + 1
\end{equation}
are primes that do not divide $j$.  Then
\begin{equation}\label{eq:Special}
n = j \left( \frac{j+k}{g}r + 1 \right)
\end{equation}
satisfies $\phi(n) = \phi(n+k)$ \cite[Thm.~1]{Graham}.  For $k$ fixed, the number of solutions 
$n \leq x$ to $\phi(n) = \phi(n+k)$ which are not of the form \eqref{eq:Special}
is less than $x/\exp( (\log x)^{1/3})$ for sufficiently large $x$ \cite[Thm.~2]{Graham}.

We are interested in the case $k = 2$ and $n = p-1$, in which $p,p+2$ are prime.
If $j$ and $j+2$ have the same prime factors, then
they are both powers of $2$.  Thus, $j=2$ and $j+k = 4$, so $g= 2$.  From \eqref{eq:jgr}
we see that $r$ is such that
\begin{equation*}
r+1 \qquad \text{and} \qquad 2r+1
\end{equation*}
are prime.  Then
$n = 2(2r+1) = p-1$, from which it follows that $p = 4r+3$ and $p+2 = 4r+5$ are prime.
Consequently,
\begin{equation*}
r+1, \qquad
2r+1,\qquad
4r+3,\quad\text{and}\quad
4r+5,
\end{equation*}
are prime.  However, this occurs only for $r=2$ since otherwise one of the preceding is a multiple of $3$
that is larger than $3$.
\end{proof}

In particular, the number of primes $p \leq x$ for which $p+2$ is prime and 
$\phi(p-1)=\phi(p+1)$ is $o(x/(\log x)^2)$.  Assuming the first Hardy--Littlewood conjecture,
 it follows that the set of such primes has density zero in the twin primes.

\medskip
\noindent\textbf{Acknowledgments.}
We thank Tom\'as Silva for independently computing the ratio $\pi_e(x)/\pi_2(x)$ for large $x$.
We also thank the anonymous referee for suggesting the approach of Section \ref{Section:Conjecture}.

\bibliographystyle{plain}

\bibliography{PRBTP}

\end{document}